\newtheorem{theorem}{Theorem}[section]
\newtheorem{definition}[theorem]{Definition}
\newtheorem{lemma}[theorem]{Lemma}
\newtheorem{proposition}[theorem]{Proposition}
\newtheorem{corollary}[theorem]{Corollary}
\newtheorem{example}[theorem]{Example}
\newcommand{\im}{\textrm{Im}}
\newcommand{\Q}{\mathcal{Q}}
\newcommand{\A}{\mathcal{A}}
\newcommand{\B}{\mathcal{B}}
\newcommand{\C}{\mathcal{C}}
\newcommand{\R}{\mathcal{R}}
\newcommand{\PP}{\mathcal{P}}
\definecolor{myYellow}{rgb}{1.0, 0.87, 0.0}
\definecolor{myYtable}{rgb}{1.0, 0.75, 0.0}
\definecolor{umass}{rgb}{0.004, 0.3215, 0.5568}
\definecolor{etonblue}{rgb}{0.59, 0.78, 0.64}
 \definecolor{mycolor}{rgb}{0.12, 0.3, 0.17}
\definecolor{myblue}{rgb}{0.03, 0.27, 0.49}
\definecolor{mediumjunglegreen}{rgb}{0.11, 0.21, 0.18}
\definecolor{asparagus}{rgb}{0.53, 0.66, 0.42}
\definecolor{goldenpoppy}{rgb}{0.99, 0.76, 0.0}
\definecolor{lincolngreen}{rgb}{0.11, 0.35, 0.02}
\definecolor{red(ncs)}{rgb}{0.77, 0.01, 0.2}
\definecolor{darkorange}{rgb}{1.0, 0.55, 0.0}
\definecolor{safetyorange(blazeorange)}{rgb}{1.0, 0.4, 0.0}
\definecolor{smokeytopaz}{rgb}{0.58, 0.25, 0.03}
\definecolor{brightgreen}{rgb}{0.4, 1.0, 0.0}
\definecolor{tue}{rgb}{0.82, 0.14, 0.14}
\newcommand{\blue}[1]{{\color{black}#1}}
\journal{arXiv}
\begin{document}

\begin{frontmatter}



\title{Discrete Imprecise Copulas}

\author[UL]{Toma\v{z} Ko\v{s}ir}
\author[TUe]{Elisa Perrone\footnote{Corresponding author, email \url{e.perrone@tue.nl}}}

\address[UL]{Faculty of Mathematics and Physics, University of Ljubljana, Jadranska 19, 1000 Ljubljana, Slovenia}
\address[TUe]{Department of Mathematics and Computer Science, Eindhoven University of Technology, Groene Loper 5, 5612 AZ, Eindhoven, The Netherlands}

\begin{abstract}
In this paper, we study discrete quasi-copulas associated with imprecise copulas. We focus on discrete imprecise copulas that are in correspondence with the Alternating Sign Matrices and provide some construction techniques of dual pairs. Additionally, we show how to obtain (full-domain) self-dual imprecise copulas through patchwork techniques.
We further investigate the properties of the constructed dual pairs and prove that they are coherent and \blue{avoid} sure loss.
\end{abstract}

\begin{keyword}
Imprecise copulas, Discrete quasi-copulas, Duality, Alternating Sign Matrices, Patchwork.
\end{keyword}

\end{frontmatter}

\section{Introduction}
\label{sec:1}
In many research fields, such as decision theory \cite{JSA,MMM,Troff}, reliability theory \cite{Coolen,OKS,UtCo,YDSS}, financial risk management \cite{ADEH,Delbaen,Vicig}, and game theory \cite{MiMo,Nau}, the assessment of the exact probability is either not possible due to lack of information or not desirable due to personal beliefs that obscure the exact probability value. Such situations require more flexible models and can be tackled through the theory of imprecise probability initiated by Walley \cite{Walley} (see \cite{ACdCT} for a more recent introduction to the topic). 

\smallskip
Imprecise probability of an event $A$ may be described as an interval $[\underline{P}(A),\overline{P}(A)]$ containing possible values of the exact probability $P(A)$. The lower probability $\underline{P}(A)$ captures the evidence supporting $A$, while the upper probability $\overline{P}(A)$ expresses the lack of evidence against $A$ (here $\underline{P}$ and $\overline{P}$ need not be \blue{additive} measures but are assumed to be monotone in sets). 
Modeling imprecision of random variables is based on the notion of a \textit{$p$-box} (a probability box) \cite{FKGMS,FeTu,PVMM-2,TroDe,TMD,UtDe}. For a single random variable $X$, a $p$-box is a pair of cumulative distribution functions $(\underline{F},\overline{F})$ such that $\underline{F}(x)\leq \overline{F}(x)$ for all $x\in\mathbb{R}$, where $\underline{F}(x)$ and $\overline{F}(x)$ represent the lower and upper bound of the probability of the event $[X\leq x]$. The point-wise infimum and supremum of any $p$-box, also called \textit{envelopes}, are exactly its lower and upper bound. Thus, they remain within the class of cumulative distribution functions. 
The situation changes dramatically for distribution functions of two or more variables, where the envelopes of a $p$-box may not be cumulative distribution functions \cite{PVMM-2}. 
To deal with this limitation, Troffaes and Destercke, in their model \cite{TroDe},  assume that the bounds of a multivariate $p$-box are multivariate distribution functions. Another approach to tackle this issue is based on a generalization of the theory of \textit{copulas} to the imprecise setting.

\smallskip
The theoretical foundation of \textit{copulas} is Sklar's Theorem \cite{Sklar} stating that copulas are joint probability distributions with uniform margins in $[0,1]$. 
As such, copulas play a central role when modeling dependence between random variables (see, e.g., \cite{DurSem15,joe_2014,Nelsen}). 
In \cite{NeUbF}, the authors show that the Dedekind-MacNeille completion of the poset of bivariate copulas for the point-wise order is the set of more general functions called \textit{quasi-copulas} \blue{\cite{ANS,A-GMDeB,genest_characterization_1999,NQRU-2,NQRU}} (see \cite{OmSt-4} for multivariate version of this result). Hence, the envelopes of a bivariate $p$-box of copulas are, in general, quasi-copulas that satisfy some additional conditions \cite{PVMM}.

\smallskip
The connection between a $p$-box of copulas and quasi-copulas makes it natural to define an \textit{imprecise copula} as a pair of quasi-copulas $(\underline{Q},\overline{Q})$ such that $\underline{Q}(x,y)\leq \overline{Q}(x,y)$ for all $x,y\in [0,1]$, satisfying some additional conditions \cite{PVMM}. An imprecise copula $(\underline{Q},\overline{Q})$ is also considered as the interval $[\underline{Q},\overline{Q}]$ in the set of all (quasi)-copulas with respect to the point-wise order. In search of an appropriate definition of imprecise copula, one wishes to generalize Sklar's Theorem to the imprecise setup. We refer to \cite{MMPV,FullSklar} for such a generalization of Sklar's Theorem in the bivariate case and to \cite{MultiSklar} for the multivariate case. See also the introduction of \cite{Stop} for an extended discussion of the topic. Recently, imprecise copulas for shock models were studied in \cite{DK-BOS,OmSk}.
Imprecise copulas can also be obtained through transformations of quasi-copulas. For example, two of the transformations introduced by Dibala et al. in \cite{DS-PMK}, i.e., the \textit{main} and the \textit{opposite defect} functions, give two imprecise copulas when applied to a quasi-copula (see \cite[Thm. 7]{FinalS}). 
Iterating the two transformations leads to a so-called \textit{self-dual imprecise copula} in the limit \cite[Prop. 10(c)]{FinalS}. 

\smallskip
The notion of defect presented in \cite{DS-PMK} has also been key in answering an open question posed in \cite{MMPV}, that is, if any imprecise copula contains a copula. 
This question relates to two important notions in the theory of imprecise probability, that \blue{is when a $p$-box  \emph{avoids sure loss} and  when it is \emph{coherent}} \cite{ACdCT,Walley}. As in \cite{PVMM-2}, for an imprecise copula $(\underline{Q},\overline{Q})$ we denote by $\C$ the set of all proper bivariate copulas $C$ such that $\underline{Q}(x,y)\leq C(x,y)\leq \overline{Q}(x,y)$ for all $x,y\in [0,1]$. Then an imprecise copula $(\underline{Q},\overline{Q})$  \blue{\emph{avoids sure loss}} if $\C\neq\emptyset$ \cite[Prop. 6]{PVMM-2} and is \emph{coherent} if the point-wise infimum of $\C$ is equal to $\underline{Q}$ and the point-wise supremum of $\C$ is equal to $\overline{Q}$ \cite[Prop. 9]{PVMM-2}. 
Omladi\v c and Stopar \cite[Example 18]{FinalS} provide an example of a bivariate imprecise copula that \blue{does not avoid} sure loss and so it is not coherent either. 
They also characterize imprecise copulas that \blue{avoid} sure loss (see \cite[Prop. 16]{FinalS} for the discrete case and \cite[Thm. 17]{FinalS} for the general case), and in doing so, they give a negative answer to the question posed in \cite{MMPV}.

\smallskip
Some of the recent developments in the literature of imprecise copulas rely on discrete versions of such functions. For example, in \cite[\S 5]{K-BKOS}, the authors provide an example of a self-dual imprecise copula on the unit square by extending an imprecise copula defined on a grid domain (see also \cite[Example 11]{FinalS}). 
This shows the benefits of investigating the properties of imprecise copulas in a discrete setting.
Discrete copulas and discrete quasi-copulas have been extensively studied in terms of their matrix representations \cite{AST-1,F-SQ-MU-F,KolMor,MST,Q-MS}. Their spaces are \textit{convex polytopes}, i.e., compact convex spaces with a finite number of extreme points \cite{Ziegler_95}. 
Namely, for uniform grid domains, discrete copulas correspond to the Birkhoff polytopes \cite{Ziegler_95}, while the discrete quasi-copulas to the Alternating Sign Matrix (ASM) polytopes \cite{striker_2009} (see \cite{Perrone2022, Perrone2021, PSU} and references therein for more details on the topic). 
Despite these results, no research has been done to specifically analyze the properties of \textit{discrete imprecise copulas}, e.g., restricted to a uniform grid domain, especially those that are self-dual.
In this paper, we fill this gap and derive interesting results on self-duality for discrete imprecise copulas. We first show how to construct full-domain (self-dual) imprecise copulas through patchwork techniques. 
Then, we give several constructions of self-dual pairs of discrete imprecise copulas and study \blue{when they are coherent and when they avoid sure loss.}

\smallskip
The paper is structured as follows. In Section \ref{sec:2]} we review basic definitions of copulas, quasi-copulas and their defects, imprecise copulas and self-dual imprecise copulas. In Section \ref{sec:3} we show that a patchwork of imprecise copulas is an imprecise copula, while self-duality is also preserved by patchwork techniques. These results hold for both the continuous and discrete cases. In Section \ref{sec:4} we focus our attention on special types of Alternating Sign Matrices (ASM) that are dense. We prove how irreducible dense ASM are patched together to get a general dense ASM. We use this result to construct a maximal chain of self-dual imprecise copulas that all correspond to dense ASM. In addition, we prove that all the dual pairs in the chain are coherent imprecise copulas. 
In Section \ref{isolated pairs}, we construct non-dense self-dual imprecise copulas in dimensions $n\ge 7$ that are based on Example 11 of \cite{FinalS}. We show that these imprecise copulas are also coherent. We conclude with a short overview and some suggestions for future research.

\section{Discrete quasi-copulas, their defects, and discrete imprecise copulas}
\label{sec:2]}

In this section, we present preliminaries on copulas, quasi-copulas, and imprecise probability that we use throughout the paper. 
We first recall the definitions of full-domain copulas and quasi-copulas. 
Then, we move to the discrete setting and discuss polytopal representations of discrete (quasi-) copulas, i.e., their properties in terms of associated matrices. 
Finally, we introduce the notions of defects and imprecise copulas in the discrete framework.
The following defines copulas as functions that satisfy functional inequalities \cite{Nelsen}.

\begin{definition}
\label{Def:cop}
A function $C:[0,1]^2 \rightarrow [0,1]$ is a copula if and only if
\begin{enumerate}
\item[(C1)] $C(x,0)=C(0,y)=0$ and $C(x,1)=x$, $C(1,y)=y$ for every $x,y \in [0,1]$;
    \vspace{3pt}
\item[(C2)] $C(x_1,y_1) + C(x_2,y_2) \geq C(x_1,y_2) + C(x_2,y_1)$ for every $x_1, x_2, y_1, y_2 \in [0,1]$ such that $x_1 \leq x_2, y_1 \leq y_2$.
\end{enumerate}
\end{definition}

Hence, bivariate copulas are functions defined on the unit square that are uniform on the boundary (C1) and supermodular (C2). The bivariate copulas form a poset $\PP$ through the point-wise partial order, that is $C\prec C^\prime$ whenever $C(x,y) \leq C^\prime(x,y)$ for all $(x,y)\in[0,1]^2$ (see Definition~2.8.1 and Example~5.13 in \cite{Nelsen}). However, not all copula pairs, $C$ and $C^\prime$, have both a least upper bound and greatest lower bound with respect to $\prec$ in the set of all copulas. Therefore, $\PP$ is not a lattice. The functions that complete $\PP$ to a lattice under $\prec$ are the so-called \textit{quasi-copulas} \cite{NeUbF}, and are defined by Genest et al.~\cite{genest_characterization_1999} as follows.

\begin{definition}
\label{Def:quasi-copula}
A function $Q:[0,1]^2 \rightarrow [0,1]$ is a quasi-copula if and only if it satisfies the following conditions:
\begin{enumerate}
    \item[(Q1)] $Q(x,0)=Q(0,y)=0$ and $Q(x,1)=x$, $Q(1,y)=y$ for every $x,y\in [0,1]$,
    \item[(Q2)] $Q$ is increasing in each component.
    \vspace{3pt}
    \item[(Q3)] $Q$ satisfies the 1-Lipschitz condition, i.e., for every $x_1,x_2,y_1,y_2 \in [0,1],$ 
    $ |Q(x_2,y_2) -Q(x_1,y_1)| \leq |x_2 - x_1| + |y_2 - y_1|.$
\end{enumerate}
\end{definition}

Equivalently, the authors of \cite{genest_characterization_1999} characterize quasi-copulas as functions that satisfy the boundary condition (C1) and are supermodular on any rectangle with at least one edge on the boundary of the unit square. Namely, we have the following.

\begin{proposition}
\label{prop:char-qc}
A function $Q:[0,1]^2\to [0,1]$ is a \emph{quasi-copula} if \blue{and only if}
\begin{enumerate}
    \item[(Q1)] $Q(x,0)=Q(0,y)=0$ and $Q(x,1)=x$, $Q(1,y)=y$ for every $x,y\in [0,1]$,
    \item[(Q2b)] $Q$ is supermodular on any rectangle with at least one edge on the boundary of the unit square, i.e., 
    \[Q(x_1,y_1) + Q(x_2,y_2) \geq Q(x_1,y_2) + Q(x_2,y_1)\]
    whenever $x_1\leq x_2$, $y_1\leq y_2$ and at least one of $x_1$, $x_2$, $y_1$, $y_2$ is either equal to 0 or to 1. 
\end{enumerate}
\end{proposition}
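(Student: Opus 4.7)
The plan is to establish both implications of the characterization. The forward direction (quasi-copula $\Rightarrow$ (Q1), (Q2b)) is essentially a check: (Q1) is identical in both definitions, and for (Q2b) on a rectangle $[x_1,x_2]\times[y_1,y_2]$ with at least one boundary edge, I split into cases. If $x_1=0$ or $y_1=0$, the supermodularity inequality collapses via (Q1) to a pure monotonicity statement, which follows from (Q2). If $x_2=1$ or $y_2=1$, it reduces via (Q1) to a Lipschitz-type bound, which follows from (Q3) together with monotonicity.

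The substantive direction is the converse. Assuming (Q1) and (Q2b), I recover (Q2) and (Q3) in turn. For monotonicity in the first argument, fix $x_1\le x_2$ and $y\in[0,1]$ and apply (Q2b) to the rectangle $[x_1,x_2]\times[0,y]$, whose lower edge lies on $\{y=0\}$. Using $Q(x_i,0)=0$, the inequality reduces directly to $Q(x_2,y)\ge Q(x_1,y)$; monotonicity in the second argument is entirely analogous.

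For the 1-Lipschitz property, take $(x_1,y_1),(x_2,y_2)\in[0,1]^2$ with (by relabelling) $x_1\le x_2$, and split the increment using the triangle inequality as $Q(x_2,y_2)-Q(x_1,y_1)=[Q(x_2,y_2)-Q(x_2,y_1)]+[Q(x_2,y_1)-Q(x_1,y_1)]$. To bound the first bracket, apply (Q2b) to $[x_2,1]\times[y',y'']$ with $y'=\min(y_1,y_2)$ and $y''=\max(y_1,y_2)$; the right edge lies on $\{x=1\}$, so $Q(1,\cdot)=\mathrm{id}$ yields $|Q(x_2,y_2)-Q(x_2,y_1)|\le |y_2-y_1|$, the absolute value being handled by the monotonicity just established. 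To bound the second bracket, apply (Q2b) to $[x_1,x_2]\times[y_1,1]$; the upper edge lies on $\{y=1\}$, so $Q(\cdot,1)=\mathrm{id}$ yields $|Q(x_2,y_1)-Q(x_1,y_1)|\le x_2-x_1$. Summing the two estimates gives the desired 1-Lipschitz bound.

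The main obstacle, conceptually, is recognising that the boundary-edge version of supermodularity is strong enough to control behaviour at interior points: the mechanism is that every interior point can be joined to the boundary by a rectangle touching $\partial[0,1]^2$, and precisely this is what converts the (Q2b) inequality into the sharp coordinate increments $x_2-x_1$ and $y_2-y_1$ required for Lipschitz continuity. Once this observation is in hand, the remainder is a short bookkeeping exercise with the boundary values supplied by (Q1).
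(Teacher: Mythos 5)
Your argument is correct. The paper itself gives no proof of Proposition~\ref{prop:char-qc}: it is quoted as a known characterization from Genest et al.\ \cite{genest_characterization_1999}, so there is nothing to compare against line by line. Your derivation is the standard one and is complete: the forward direction correctly reduces the boundary-edge supermodularity to monotonicity (when $x_1=0$ or $y_1=0$) and to the one-variable Lipschitz bound (when $x_2=1$ or $y_2=1$), and the converse correctly extracts monotonicity from rectangles anchored at the axes and the $1$-Lipschitz property from rectangles anchored at $x=1$ and $y=1$, combined via the triangle inequality. One cosmetic remark: in the forward direction the case $x_2=1$ already follows from (Q3) alone, since $|Q(x_1,y_2)-Q(x_1,y_1)|\leq y_2-y_1$ immediately gives the required one-sided bound, so invoking monotonicity there is unnecessary (though harmless). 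Note also that the proposition as printed is stated only as an ``if''; your proof of the reverse implication establishes the full equivalence asserted in the surrounding text.
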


Conditions (Q2b) of Proposition~\ref{prop:char-qc} are a subset of the inequalities of (C2) in Definition~\ref{Def:cop}. Thus, any copula is also a quasi-copula.
Given a (quasi-) copula $Q$, we define the $Q$-volume of a rectangle $R=[x_1,x_2]\times[y_1,y_2]\subset [0,1]^2$ with $x_1\leq x_2$, $y_1\leq y_2$ by 
\begin{equation}\label{V_Q}
    V_Q(R)=Q(x_1,y_1) + Q(x_2,y_2) - Q(x_1,y_2) - Q(x_2,y_1).
\end{equation}
We say that a quasi-copula $Q$ is \textit{proper} if there exists a rectangle $R=[x_1,y_1] \times [x_2,y_2] \subset [0,1]^2$ such that the volume induced by $Q$ is negative, which is $V_Q(R) = Q(x_1,y_1) + Q(x_2,y_2) - Q(x_1,y_2) - Q(x_2,y_1) < 0$. 

We notice that Proposition~\ref{prop:char-qc} and Definition~\ref{Def:cop} characterize quasi-copulas and copulas through the mass that they can associate with each rectangle of the unit square $V_Q(R)$. In the next subsection, we see how the mass distribution plays a role in connecting discrete (quasi-) copulas with special matrices.

\subsection{Polytopes of discrete copulas and quasi-copulas}

We now recall the most relevant results on copulas and quasi-copulas in the discrete setting. The general case of discrete copulas and quasi-copulas on arbitrary grid domains and their polytopal representations has been tackled in \cite{AST-1,AST-2,KMMS,MST,Perrone2022,Perrone2021, PSU, Q-MS}. 
Here we restrict ourselves to the special case of uniform grid domains, that is, we consider a uniform partition of the unit interval $I_n=\{ 0, 1/n, \ldots, (n-1)/n, 1 \}$ for $n\in\mathbb{N}$ and define discrete (quasi-) copulas on the square grid $I_n^2$.
In such a case, a discrete copula is a function $C: I_n^2 \rightarrow [0,1]$ that satisfies the conditions (C1) and (C2) of Definition~\ref{Def:cop}. Analogously, we define discrete quasi-copulas as functions on $I_n^2$ satisfying (Q1) and (Q2b) of Proposition~\ref{prop:char-qc}. 

\smallskip
We denote by $\Q_n$ the set of $n \times n$ discrete quasi-copulas and, respectively, by $\C_n$ the set of all discrete copulas on $I_n^2$. 
In the following, we mostly refer to the discrete quasi-copulas since discrete copulas are their special cases.
We notice that any discrete quasi-copula $Q: I_n^2 \rightarrow [0,1]$ can be expressed as a map $Q':L_n^2 \rightarrow \blue{[0,n]}$, with $L_n=\{0,1,\ldots,n\}$, through the following transformation:
\begin{equation}
\label{eq:Qprime}
Q'(r,s)=n\cdot Q\left(\frac{r}{n},\frac{s}{n}\right) \textrm{ for } r,s\in L_n.    
\end{equation}
Hence, we denote by $\Q_n'$ the set of all functions on $L_n^2$ that correspond to discrete quasi-copulas and by $\C_n'$ the set of all functions on $L_n^2$ that correspond to discrete copulas. Functions in $\C'_n$ and $\Q'_n$ satisfy conditions on $L_n$ that are analogous to conditions $(C1)$-$(C2)$ and $(Q1)$-$(Q3)$. \blue{We include precise definitions for clarity.
\begin{definition}
\label{Def:discete_cop}
A function $C:L_n^2 \rightarrow [0,n]$ is a  \emph{discrete copula} if and only if
\begin{enumerate}
\item[(C1')] $C(r,0)=C(0,s)=0$ and $C(r,n)=r$, $C(n,s)=s$ for every $r,s \in L_n$;
    \vspace{3pt}
\item[(C2')] $C(r_1,s_1) + C(r_2,s_2) \geq C(r_1,s_2) + C(r_2,s_1)$ for every $r_1, r_2, s_1, s_2 \in L_n $ such that $r_1 \leq r_2, s_1 \leq s_2$.
\end{enumerate}
\end{definition}

\begin{definition}
\label{Def:discrete quasi-copula}
A function $Q:L_n^2 \rightarrow [0,n]$ is a \emph{discrete quasi-copula} if and only if it satisfies the following conditions:
\begin{enumerate}
    \item[(Q1')] $Q(r,0)=Q(0,s)=0$ and $Q(r,n)=r$, $Q(n,s)=s$ for every $r,s\in L_n$,
    \item[(Q2')] $Q$ is increasing in each component.
    \vspace{3pt}
    \item[(Q3')] $Q$ satisfies the 1-Lipschitz condition, i.e., for every $r_1,r_2,s_1,s_2 \in L_n,$ 
    $ |Q(r_2,s_2) -Q(r_1,s_1)| \leq |r_2 - r_1| + |s_2 - s_1|.$
\end{enumerate}
\end{definition}

Note that Proposition \ref{prop:char-qc} also has its discrete counterpart. For future reference we only state the discrete counterpart of Condition (Q2b).\emph{ 
\begin{enumerate}
    \item[(Q2b')] $Q:L_n^2\to [0,n]$ is supermodular on any rectangle with at least one edge on the boundary, i.e., 
    \[Q(r_1,s_1) + Q(r_2,s_2) \geq Q(r_1,s_2) + Q(r_2,s_1)\]
    whenever $r_1\leq r_2$, $s_1\leq s_2$ and at least one of $r_1$, $r_2$, $s_1$, $s_2$ is either equal to $0$ or to $n$. 
\end{enumerate}
}
}

Quasi-copulas and copulas on $I_n$ (or on $L_n$) can naturally be identified with square matrices of size $n$. 
In particular, we denote the matrix of $Q\in\Q_n$ by the same letter $Q$ and write  
$$Q=\begin{pmatrix}
 q_{11} & q_{12} & \cdots  & q_{1n} \\
 q_{21} & q_{22} & \cdots  & q_{2n} \\
 \vdots & \vdots & & \vdots\\
 q_{n1} & q_{n2} & \cdots  & q_{nn} 
\end{pmatrix},\ \ \mathrm{where}\ q_{rs}=Q\left(\frac{r}{n},\frac{s}{n}\right).$$
We here omit the values at $r=0$ and $s=0$ that are all $0$. In addition, we preserve the usual directions of the rows and columns in the matrices, which correspond to the usual coordinate system that is rotated by $90^{\circ}$ in a positive direction.


\smallskip
As mentioned in the introduction, the sets $\C_n$ and $\Q_n$ are in one-to-one correspondence with two convex polytopes studied in discrete geometry. That is, we have the following connections.
\begin{description}
\item[]\textbf{Discrete copulas and Birkhoff polytopes.} The set of discrete copulas on $I_n^2$ \blue{(or $L_n^2$)} is in one-to-one correspondence with the \textit{Birkhoff Polytope} $\B_n$ (see Proposition~2 in \cite{KMMS}). This polytope is the space of the so-called \textit{bistochastic matrices} (BM), i.e., square matrices with non-negative entries and row and column sums equal to one \cite{Ziegler_95}.
The extreme points, i.e., the \textit{vertices}, of $\B_n$ are the permutation matrices, which are bistochastic matrices with entries equal to $0$ or $1$.
\item[] \textbf{Discrete quasi-copulas and Alternating Sign Matrix Polytopes.} The set of discrete quasi-copulas on $I_n^2$ corresponds to the \textit{Alternating Sign Matrix Polytope} $\A^{B}_n$ \cite{AST-1}. This polytope has been studied in \cite{striker_2009} and is the space of the \textit{alternating bistochastic matrices} (ABM), i.e., square matrices that generalize the bistochastic matrices allowing for negative entries. Specifically, Theorem~2.1 of \cite{striker_2009} states that a matrix $A=[a_{ij}]_{i,j=1}^n$ belongs to the Alternating Sign Matrix Polytope, i.e., $A$ is an \emph{ABM} if, for all \blue{$s, r =1, \ldots, n$}, it satisfies the following inequalities:
\begin{equation}\label{ASM_conditions}
\blue{0\leq\sum_{i=1}^{r}a_{i,s}\leq 1,
\quad
0\leq\sum_{j=1}^{s}a_{r,j}\leq 1, \quad \sum_{i=1}^na_{i,s}=1, \quad
\sum_{j=1}^na_{r,j}=1.}
\end{equation}
The vertices of $\A^{B}_n$ are the alternating sign matrices (ASM), which are alternating bistochastic matrices with entries equal to $-1, 0,$ or $1$. Conditions \blue{in Equation} (\ref{ASM_conditions}) imply that in each row and each column of ASM the nonzero entries alternate in sign. We denote the set of $n \times n$ alternating sign matrices by $\A_n$. By definition, in each row and column of an ASM the first and last non-zero entries are equal to $1$. 
 Additionally, all permutation matrices are ASM. We say that $A\in\A_n$ is a \emph{proper ASM} if it contains at least one entry equal to $-1$.
We say that a matrix is \textit{nonnegative} if all its entries are nonnegative.
\end{description}

\noindent
Given $A\in\A^B_n$, we can construct the unique matrix $Q\in \Q_n$ associated with $A$ as in \cite[Prop. 3 \& Cor. 1]{AST-1}. Namely, we proceed as follows
\begin{equation}
\label{eq:bij1}
Q\left(\frac{r}{n},0\right)=Q\left(0,\frac{s}{n}\right)=0,\ \textrm{ for } r,s\in L_n,\textrm{ and }
Q\left(\frac{r}{n},\frac{s}{n}\right)=\frac{1}{n}\sum_{i=1}^r\sum_{j=1}^s a_{i,j}, \textrm{ for }r,s\ge 1.
\end{equation}
\blue{In a similar way, we can derive an element $Q\in\Q'_n$ associated to $A$ by 
\begin{equation}
\label{eq:bij1_prime}
Q\left({r},0\right)=Q\left(0,{s}\right)=0,\ \textrm{ for } r,s\in L_n,\textrm{ and }
Q\left({r},{s}\right)=\sum_{i=1}^r\sum_{j=1}^s a_{i,j}, \textrm{ for }r,s\ge 1.
\end{equation}
}

Conversely, for any $Q\in\blue{\Q'_n}$, the uniquely associated matrix $A=\left[a_{st}\right]_{s,t=1}^n $ in $\A^B_n$ is given by
\begin{equation}
\label{eq:transf}
a_{s,t}=Q(s,t)+Q(s-1,t-1)-Q(s,t-1)-Q(s-1,t).
\end{equation}
We sometimes write $A=A(Q)$ and $Q=Q(A)$ to emphasize the correspondence between $A$ and $Q$. The one-to-one relation can be transferred to $\Q_n$ by \blue{dividing} all values by $n$. 
Furthermore, the same bijective maps link the elements of \blue{$\C'_n$} or $\C_n$  with the elements of $\B_n$.
\blue{Note that the (discrete) Fr\'echet upper and lower bounds $M_n$ and $W_n$, respectively, correspond to the matrices 
$$I_n=\begin{pmatrix}
  1 & 0 & 0 & \cdots  & 0 & 0\\
  0 & 1 & 0 & \ddots  & 0 & 0\\
  0 & 0 & 1 &\ddots  & 0 & 0\\
 \vdots & \ddots & & \ddots & \ddots & \vdots\\
 0 & 0 & 0&  \ddots & 1 & 0\\
 0 & 0 & 0&  \cdots & 0 & 1
\end{pmatrix}, \text { and } H_n=\begin{pmatrix}
  0 & 0 &  \cdots  & 0 & 0 & 1\\
  0 & 0 &  \iddots  & 0 & 1 & 0\\
  0 & 0 & \iddots  & 1 & 0 & 0\\
  \vdots  & \iddots & \iddots & \iddots  & & \vdots\\
  0 & 1 &  \iddots & 0 & 0 & 0 \\
  1 & 0 &   \cdots & 0 & 0 &0
\end{pmatrix},$$
which are the identity matrix and the anti-identity matrix, respectively.

In the discrete setup, we mostly use the correspondence between $\A^{B}_n$ and $\Q'_n$ and its restriction to the correspondence between $\B_n$ and $\C'_n$.
}

Transformations $A\mapsto Q(A)$ and $Q\mapsto A(Q)$ can be expressed by matrix multiplications. More precisely, denote by $J_n$ the the nilpotent upper-triangular Jordan matrix of size $n$:
$$J_n=\begin{pmatrix}
 0 & 1 & 0 & \cdots  & 0 & 0\\
 0 & 0 & 1 & \ddots  & 0 & 0\\
 0 & 0 & 0 & \ddots  & 0 & 0\\
 \vdots & \vdots & & \ddots & \ddots & \vdots\\
 0 & 0 & 0&  \cdots & 0 & 1 \\
 0 & 0 & 0&  \cdots & 0 & 0 
\end{pmatrix},$$
write $K_n=I_n-J_n$ and $A^T$ for the transpose of a matrix $A$. Then \blue{for $Q\in\Q_n'$ we have}
$$A(Q)=K_n^TQK_n \text{ and } Q(A)=(K_n^T)^{-1}AK_n^{-1}.$$

Next, we consider the set $\R$ of all rectangles with vertices on the grid $L_n\times L_n$. So, $\R$ is the set of all rectangles $[i,j]\times [k,l]$, where $0\leq i\leq j\leq n$ and $0\leq k\leq l\leq n$, and $[i,j]=\{i,i+1,\ldots,j\}$ for $i\leq j$.
We also notice that the corresponding ABM $A=\left[a_{st}\right]_{s,t=1}^n $ to a discrete quasi-copula $Q \blue{\in\Q_n'}$ represents the values of the volumes of $Q$ for each square of the grid. Indeed, we have that for any rectangle $R=[i,j]\times[k,l]\in\R$ 
\begin{equation}\label{V_Q(A)}
    V_Q(R)=\sum_{r=i+1}^{j}\sum_{s=k+1}^{l}a_{r,s}.
\end{equation}

The extreme points of $\A^B_n$ and $\B_n$ play a special role in their copula counterparts. 
In particular, they are linked with discrete quasi-copulas and discrete copulas whose range is exactly equal to $I_n$. Since $I_n$ is always contained in the range of $Q \in \Q_n$, we call discrete quasi-copulas $Q$ with $\im(Q)=I_n$ \emph{of minimal range} (analogously for discrete copulas). Such discrete quasi-copulas are called irreducible in \cite[Def. 8]{AST-1} or \cite[Def. 2.3]{Kob} because they are in fact the extreme points of $\Q_n$.

The simplest proper \blue{discrete} quasi-copula $Q\in\Q'_n$ of minimal range and its corresponding ASM are
    \begin{equation}
        \label{3x3 proper ASM}
Q=\begin{pmatrix}
       0  & 1 & 1 \\
       1  & 1 & 2 \\
       1  & 2 & 3
\end{pmatrix} \mathrm{and}\ 
    A(Q)=\begin{pmatrix}
        0 & 1 & 0 \\
        1 & -1 & 1 \\
        0 & 1 & 0
\end{pmatrix}
    \end{equation}
\blue{Due to Condition (Q2b') there is no proper discrete quasi-copula on  $L_2^2$ since there is no rectangle on the grid that would not have an edge on the boundary. Thus, the smallest proper discrete quasi-copula is on $L_3^2$. This grid contains only one square that does not touch the boundary. So, the discrete quasi-copula in Eq.~\eqref{3x3 proper ASM} is the only proper discrete quasi-copula of minimal range on $L_3^2$.}

\medskip

In the following, we study the existence of a (discrete) copula $C$ such that \blue{we have $Q_1\leq C\leq Q_2$ for two quasi-copulas $Q_1$ and $Q_2$ with respect to the point-wise order}. To do so, we need to introduce the notions of \textit{defect} and of \textit{imprecise copula}.

\subsection{Defects of discrete quasi-copulas}
Defects of quasi-copulas are introduced in Dibala et al \cite[\S 3]{DS-PMK}. 
Here we define them only in discrete settings. Since we prefer the usual directions for the numbering of rows and columns in matrices, all our arrows are rotated by $90^{\circ}$ as compared to the arrows in \cite{DS-PMK}. 

We consider again the set $\R$ of all rectangles on the grid $L_n^2$. For each pair $(r,s)\in L_n^2$ we introduce four sets of rectangles. These are rectangles that have one of the corners fixed on the grid. In particular, we define four sets of rectangles:
\begin{eqnarray*}
    \R_{\searrow}(r,s)&=&\left\{[r+1,r+i]\times [s+1,s+j];\ 1\leq i\leq n-r, 1\leq j\leq n-s\right\},\\
    \R_{\swarrow}(r,s)&=&\left\{[r+1,r+i]\times [j,s];\ 1\leq i\leq n-r, 1\leq j\leq s\right\},\\
    \R_{\nwarrow}(r,s)&=&\left\{[i,r]\times [j,s];\ 1\leq i\leq r, 1\leq j\leq s\right\},\\
    \R_{\nearrow}(r,s)&=&\left\{[i,r]\times [s+1,s+j];\ 1\leq i\leq r, 1\leq j\leq n-s\right\}.
\end{eqnarray*}

\blue{The choice of the rectangles in the four sets requires some explanation. Let us consider an ABM $A$ and the corresponding discrete quasi-copula  $Q(A)$. Then, we observe that the entry $a_{rs}$ of $A$ gives the volume of $[r-1,r]\times[s-1,s]$ by Equation~\eqref{eq:transf}. In addition, the entry $q_{rs}$ of $Q(A)$ is the volume of the rectangle $[0,r]\times [0,s]$. The square $[r-1,r]\times[s-1,s]$ is included in any non-degenerated rectangle in the $\nwarrow$ direction with respect to the grid point $(r,s)$ and each rectangle of $\R_{\nwarrow}(r,s)$ is contained in $[0,r]\times [0,s]$. This rectangle has only degenerated intersection with rectangles in the other three directions with respect to the grid point $(r,s)$. Thus the defect directions are chosen with respect to the points of the grid $L_n^2$, while the values of $A$ or $Q(A)$ give volumes of particular squares or rectangles in the $\nwarrow$ direction from the chosen grid point. }

Now, for each discrete quasi-copula $Q\in\Q_n'$ we define four \emph{directional defect matrices} $D_{\nearrow}^Q,D_{\nwarrow}^Q,D_{\swarrow}^Q$ and $D_{\searrow}^Q$.  Their entries are numbered by the \blue{grid points in the set $L_n^2$ as explained above.} We omit the bottom row and left column of zeros and consider them as square matrices of size $n$. They are given by
\begin{eqnarray*}
    D_{\searrow}^Q(r,s)&=&\min\{0,V_Q(R);\ R\in\R_{\searrow}(r,s)\},\\
    D_{\swarrow}^Q(r,s)&=&\min\{0,V_Q(R);\ R\in\R_{\swarrow}(r,s)\},\\
    D_{\nwarrow}^Q(r,s)&=&\min\{0,V_Q(R);\ R\in\R_{\nwarrow}(r,s)\},\\
    D_{\nearrow}^Q(r,s)&=&\min\{0,V_Q(R);\ R\in\R_{\nearrow}(r,s)\}.
\end{eqnarray*}
Observe that if $Q$ is a \blue{(discrete)} copula then all the defects are equal to $0$. So, it is only interesting to study defects for proper \blue{(discrete)} quasi-copulas. 

Two additional defect matrices are important. They are called \emph{the main and the opposite defect matrices} and denoted by $D_M^Q$ and $D_O^Q$, respectively. Their entries are given by
\begin{eqnarray*}
    D_M^Q(r,s)&=&\min\left\{D_{\searrow}^Q(r,s),D_{\nwarrow}^Q(r,s)\right\},\\
    D_O^Q(r,s)&=&\min\left\{D_{\swarrow}^Q(r,s),D_{\nearrow}^Q(r,s)\right\}.
\end{eqnarray*}
To simplify the notation we replace the subscript $Q$ by $A$ when we consider an ASM matrix $A$ instead of the corresponding \blue{discrete} quasi-copula $Q(A)$ in the computation of volumes $V_Q(R)$. Then we write $D_{\searrow}^A$, etc. In such a case, we refer to the defect matrices of an ASM, meaning the defect matrices of the corresponding \blue{discrete} quasi-copula $Q(A)\in\Q_n'$.

Using the defect matrices we obtain six transformation on \blue{(discrete)} quasi-copulas. They are:
\begin{eqnarray*}
    Q_{\searrow}&=&Q-D^Q_{\searrow},\\
    Q_{\swarrow}&=&Q+D^Q_{\swarrow},\\
    Q_{\nwarrow}&=&Q-D^Q_{\nwarrow},\\
    Q_{\nearrow}&=&Q+D^Q_{\nearrow},\\
    Q_{M}&=&Q-D^Q_{M},\\
    Q_{O}&=&Q+D^Q_{O}.
\end{eqnarray*}
Dibala et al \cite[Thm. 4.3]{DS-PMK} show that all six transformations applied to a \blue{(discrete)} quasi-copula yield \blue{(discrete)} quasi-copulas. 

\blue{
\begin{example}
    Consider the ASM $A$ and the discrete quasi-copula $Q=Q(A)$ given in Equation~\eqref{3x3 proper ASM}. The corresponding directional defect matrices are
$$D_{\searrow}^{Q}=\begin{pmatrix}
       -1 & 0 & 0 \\
       0  & 0 & 0 \\
       0  & 0 & 0
\end{pmatrix},\ 
    D_{\swarrow}^{Q}=\begin{pmatrix}
       0  & -1& 0 \\
       0  & 0 & 0 \\
       0  & 0 & 0
\end{pmatrix},\ 
    D_{\nwarrow}^{Q}=\begin{pmatrix}
       0  & 0 & 0 \\
       0  & -1& 0 \\
       0  & 0 & 0
\end{pmatrix}, 
     \ \mathrm{and}\ 
    D_{\nearrow}^{Q}=\begin{pmatrix}
        0 & 0 & 0 \\
        -1 & 0 & 0 \\
        0 & 0 & 0
\end{pmatrix}.
$$
The main and the opposite defect matrices are
$$
    D_M^{Q}=\begin{pmatrix}
       -1  & 0 & 0 \\
       0  & -1& 0 \\
       0  & 0 & 0
\end{pmatrix}, 
     \ \mathrm{and}\ 
    D_O^{Q}=\begin{pmatrix}
        0 & -1 & 0 \\
        -1 & 0 & 0 \\
        0 & 0 & 0
\end{pmatrix}.
$$
\end{example}
The six transformations applied to $Q$ give discrete quasi-copulas that are in fact discrete copulas:
$$
 Q_{\searrow}=\begin{pmatrix}
       1  & 1 & 1 \\
       1  & 1 & 2 \\
       1  & 2 & 3
\end{pmatrix}, \ 
    Q_{\swarrow}=\begin{pmatrix}
       0  & 0 & 1 \\
       1  & 1 & 2 \\
       1  & 2 & 3
\end{pmatrix},\ 
    Q_{\nwarrow}=\begin{pmatrix}
       0  & 1 & 1 \\
       1  & 2 & 2 \\
       1  & 2 & 3
\end{pmatrix}
$$
$$    Q_{\nearrow}=\begin{pmatrix}
       0  & 1 & 1 \\
       0  & 1 & 2 \\
       1  & 2 & 3
\end{pmatrix},\ 
    Q_{M}=\begin{pmatrix}
       1  & 1 & 1 \\
       1  & 2 & 2 \\
       1  & 2 & 3
\end{pmatrix},   \ \mathrm{and}\ 
    Q_{O}=\begin{pmatrix}
       0  & 0 & 1 \\
       0  & 1 & 2 \\
       1  & 2 & 3
\end{pmatrix}.
$$
Note that $Q_M=M_3$ and $Q_O=W_3$. It is interesting to observe that $Q$ together with its six defect transformations form the set of all discrete quasi-copulas of minimal range on $L_3^2$ (compare with \cite[Fig. 2]{BS}).
}

\subsection{Discrete imprecise copulas}
In this subsection, we introduce imprecise copulas and their desirable properties. \blue{We define when an imprecise copula is coherent and when it {avoids} sure loss}. We start by recalling the definition of an imprecise copula, \blue{first in the general setting and then in the discrete setting}. \blue{To streamline the notation, we here use $P$ and $Q$ for the lower and upper bound of an imprecise copula instead of $\underline{Q}$ and $\overline{Q}$ of the introduction. }

\blue{\begin{definition}
    A pair $(P,Q)$ of functions $P,Q:[0,1]^2\to [0,1]$ is called an \emph{imprecise copula} if $P$ and $Q$ satisfy the following conditions:
    \begin{enumerate}
        \item[(ICG1)]  Both $P$ and $Q$ satisfy condition (Q1).
        \item[(ICG2)] For each rectangle $R=[a,b]\times [c,d]$ we have:
        \begin{eqnarray*}
            Q(a,c)+P(b,d)-P(a,d)-P(b,c)&\geq & 0,\\
            P(a,c)+Q(b,d)-P(a,d)-P(b,c)&\geq & 0,\\
            Q(a,c)+Q(b,d)-Q(a,d)-P(b,c)&\geq & 0,\\
            Q(a,c)+Q(b,d)-P(a,d)-Q(b,c)&\geq & 0.
        \end{eqnarray*}
    \end{enumerate}
\end{definition}
}

\begin{definition}
    A pair $(P,Q)$ of functions $P,Q:L_n^2\to \blue{[0,n]}$ is called a \emph{\blue{discrete} imprecise copula} if $P$ and $Q$ satisfy the following conditions:
    \begin{enumerate}
        \item[(IC1)]  Both $P$ and $Q$ satisfy condition (Q1').
        \item[(IC2)] For each rectangle $R=[i,j]\times [k,l]\in\R$ we have:
        \begin{eqnarray*}
            Q(i,k)+P(j,l)-P(i,l)-P(j,k)&\geq & 0,\\
            P(i,k)+Q(j,l)-P(i,l)-P(j,k)&\geq & 0,\\
            Q(i,k)+Q(j,l)-Q(i,l)-P(j,k)&\geq & 0,\\
            Q(i,k)+Q(j,l)-P(i,l)-Q(j,k)&\geq & 0.
        \end{eqnarray*}
    \end{enumerate}
\end{definition}
It follows from a result of Montes et al. \cite[Prop. 2]{MMPV} that given a \blue{(discrete)} imprecise copula then both $P$ and $Q$ are quasi-copulas and $P(r,s)\leq Q(r,s)$ for all $(r,s)$, i.e., $P\leq Q$ in the \blue{point}-wise order. We also assume for any imprecise copula $(P,Q)$ that both $P$ and $Q$ are proper quasi-copulas.
Dibala et al \cite[Thm. 5.2]{DS-PMK} show that a pair $(P,Q)$ of quasi-copulas is an imprecise copula if and only if $P_M\le Q$ and $P\le Q_O$. There, the authors also observed that $(P,P_M)$ and $(Q_O,Q)$ are imprecise copulas as well. Repeating the operations on the first of the latter two imprecise copulas, one obtains $P\leq (P_M)_O\leq P_M\leq Q$, so that $((P_M)_O,P_M)$ is an imprecise copula inside the original imprecise copula. Iterating the process further, we get a sequence of embedded quasi-copulas $(P_k,Q_k)$ given by $P_0=P, Q_0=Q$ and $P_k=(Q_{k})_O, Q_k=(P_{k-1})_M$. The limiting pair $(\overline{P},\overline{Q})$ is also an imprecise copula which satisfies the equalities $(\overline{P})_M=\overline{Q}$ and $(\overline{Q})_O=\overline{P}$. See \cite[Prop. 10]{FinalS} for the results on the iteration process.

\begin{definition}
\label{def:imco}
    A \blue{(discrete)} imprecise copula $(P,Q)$ is \emph{self-dual} if $P_M=Q$ and $Q_O=P$. Alternatively, we say that a pair of \blue{(discrete)} quasi-copulas $(P,Q)$ is \emph{a dual pair} if $(P,Q)$ is a self-dual \blue{(discrete) imprecise copula}. Furthermore, if $P,Q\in\Q_n'$ and $A=A(P)$, $B=A(Q)$ then we say that a pair of ASM $(A,B)$ is \emph{a dual pair} if $(P,Q)$ is a self-dual \blue{discrete} imprecise copula, i.e., if $(P,Q)$ is a dual pair of \blue{discrete} quasi-copulas.  
\end{definition}

It follows from Definition~\ref{def:imco} that an imprecise copula $(P,Q)$ is self-dual if and only if $D_M^P=D_O^Q$. 

\medskip 

An example of a discrete imprecise copula $(P,Q)$, where both $P$ and $Q$ are quasi-copulas of minimal range of size $7$, is given by Omladi\v{c} and Stopar \cite[Ex. 11]{FinalS}. The same example gives a self-dual discrete imprecise copula which is then extended to a dual pair of proper full-domain quasi-copulas in \cite[Sect. 5]{K-BKOS}. In Section~\ref{isolated pairs}, we provide a generalization of this example to finer grids of size $n\ge 8$.

\begin{definition}\label{def_no sure loss}
    A discrete imprecise copula \blue{$(P,Q)$}  \blue{\emph{defined on $L_n^2$ avoids sure loss}} if there exists a discrete copula $C$ defined on $L_n^2$ such that $P\leq C\leq Q$. 
\end{definition}

\begin{definition}\label{def_coherence}
    Suppose that a \blue{discrete} imprecise copula $(P,Q)$  \blue{\emph{avoids sure loss}}, i.e., the set $\C(P,Q)$ of all copulas such that $P\leq C\leq Q$ is not empty. Then $(P,Q)$ is \emph{coherent} if $P(r,s)=\inf_{C\in\C(P,Q)}\{C(r,s)\}$ and $Q(r,s)=\sup_{C\in\C(P,Q)}\{C(r,s)\}$ for all $r,s\in L_n$. 
\end{definition}
Omladi\v{c} and Stopar \cite[\S 5]{FinalS} show that not every (discrete) imprecise copula \blue{avoids} sure loss nor it is coherent. Moreover, they provide a characterization of discrete imprecise copulas that \blue{avoid} sure loss \cite[Prop. 16]{FinalS}.


\section{Patchwork construction}
\label{sec:3}
The first contribution of this paper is a general construction of dual pairs of quasi-copulas using patchwork techniques \cite{K-BKOS}. We
note that the results presented in this section apply to both discrete and full-domain quasi-copulas.
\blue{We first recall the definition of a patchwork construction for quasi-copulas on a single rectangle $R=[a,b]\times[c,d]\subset [0,1]^2$ as introduced in \cite[Thm. 1]{K-BKOS}. 
Since we are later interested in imprecise copulas, here we use a copula function as the `background' function. The patchwork construction given in \cite[Thm. 1]{K-BKOS} is more general, as it can be applied when the `background' function is a quasi-copula. 
However, in such a case, the analysis of defects is not straightforward. Therefore, we do not include this scenario here.

We consider a (discrete) copula $C$ (as the `background' function) that is defined at the four vertices of $R$. Next, we are given two pairs of univariate functions $F,F':[a,b]\to [0,1]$ and $G,G':[c,d]\to [0,1]$ that coincide with the background function $C$ at the the points on the edges of $R$ at which $C$ is defined, and that satisfy the boundary conditions, which are:
\begin{enumerate}
    \item[(i)] The values of these functions are uniquely determined at the vertices of $R$, i.e.,  $\alpha:=F(a) =G(c)$, $\beta :=F(b) =G'(c)$, $\gamma:=F'(b) =G'(d)$, and $\delta:=F'(a) =G(d)$.
\item[(ii)] These functions are increasing and $1$-Lipschitz.
\item[(iii)] For all $x \in [a, b]$ and $y\in[c, d]$ we have $0 \le F'(x) - F(x) \le d - c$ and $0 \le G'(y) - G(y)\le b - a$. 
\item[(iv)] The differences $F'(x)-F(x)$ and $G'(x)-G(x)$ are both monotone on their respective domains.
\item[(v)] The values at the end points are compatible with the $C$-volume: 
$$V_C(R)=(\gamma - \beta) - (\delta - \alpha) = (\gamma - \delta) - (\beta - \alpha),$$ 
where the four differences in the brackets are 'signed' lengths of the edges of $R$ given by the four boundary functions.
\end{enumerate}
Now, given a (discrete) quasi-copula $Q_A$, we define the patch for $Q$ on $R$ by letting
\begin{equation}\label{the patch}
Q(x, y) = F(x) + G(y) - \alpha + V_C(R)\cdot \overline{Q}_A (x, y),
\end{equation}
for any $(x, y) \in R$, where
$$\overline{Q}_A(x, y) = Q_A\left(\frac{F'(x) - F(x) - (\delta - \alpha)}{V_C(R)}, \frac{G'(y) - G(y) - (\beta - \alpha)}{V_C(R)}\right).
$$
\cite[Thm. 1]{K-BKOS} proves that the above construction gives a quasi-copula.

\blue{Suppose now that the unit square $I^2=[0,1]^2$ is partitioned into $m \in \mathbb{N}$ rectangles $R_k$, $k=1,\ldots,m$, that intersect only along their edges. 
Further, assume that we have a (discrete) copula $C$ that is defined at all of the vertices of rectangles $R_k$ and that we are given a (discrete) quasi-copula $Q_k$ for each $k=1,\ldots,m$, with $V_C(R_k)>0$. Further, we are given univariate boundary functions $F_k$, $F_k'$, $G_k$, $G_k'$ for each rectangle $R_k$ such that they satisfy the boundary conditions (i)-(v) and coincide along a common edge of any pair of rectangles. 
The patchwork quasi-copula $Q$ on $I^2$ (or $I_n^2$ in the discrete case) is then defined by Equation~\eqref{the patch} for each $R_k$ with $V_C(R_k)>0$. 
Note that if $V_C(R_l)=0$ for some $l$ then the copula $C$ is constant on such a rectangle $R_l$. 
We observe that the boundary conditions (i) and (ii) imply that then also the corresponding boundary functions are constant on the edges of $R_l$.  For such rectangles we take the constant patch, i.e., we define $Q$ to be constant on $R_l$ and its value equal to the value of $C$ on $R_l$.}

To further illustrate the idea behind this patchwork approach, we now present two examples of quasi-copulas (continuous and discrete) constructed through this technique.}

\textcolor{black}{
\begin{example}
\label{Ex:patch}
We partition the unit square in the four rectangles $R_1=[0,1/2]\times[0,1/2], R_2=[1/2,1]\times [0,1/2], R_3=[1/2,1]\times[1/2,1], R_4=[0,1/2]\times[1/2,1]$. 
As the `background function', we consider a copula $C$ such that it spreads the probability mass as follows: $C(R_1)=1/6$, $C(R_2)=1/3$, $C(R_3)=1/6$, and $C(R_4)=1/3$. 
Note that we can derive a discrete copula $DC$ associated with this partition of $C$ which corresponds to the following bistochastic matrix $B_C$:
\[B_C=\begin{pmatrix}1/3 & 2/3 \\ 2/3 & 1/3\end{pmatrix}.
\] 
The patchwork method described above allows for constructing proper quasi-copulas that preserve the probability mass assigned to each rectangle $R_i$, with $i=1,2,3,4$, by the background function, i.e., the copula $C$. We can choose any proper quasi-copula to redistribute the mass in each sub-rectangle of the partition. Figure~\ref{fig:expatch} shows two possible constructions of patchworks based on the specific setting of this example. 
The picture to the left of Figure~\ref{fig:expatch} represents a proper discrete quasi-copula obtained by using the following discrete quasi-copulas for the redistribution in each rectangle, i. e., $A_i$ on $R_i$:
\[ A_1 = A_3 = \begin{pmatrix}0 & 1 & 0 \\ 1 & -1 & 1 \\ 0 & 1 & 0 \end{pmatrix},\ A_2 = \begin{pmatrix}1 & 0 & 0 \\ 0 & 1 & 0 \\ 0 & 0 & 1 \end{pmatrix},\ \text{and}\ A_4 = \begin{pmatrix}0 & 0 & 1 \\ 0 & 1 & 0 \\ 1 & 0 & 0. \end{pmatrix}\]
The picture to the right of Figure~\ref{fig:expatch} shows the support of a possible patchwork construction that results in a proper full-domain quasi-copula. There, the black lines indicate the positive mass, while the red dashed lines indicate the negative mass.
The thickness of mass along the lines in $R_2$ and $R_3$ is twice the thickness of mass along the lines in the other two squares. A boundary function along any common edge of two rectangles is linear. It is determined by the value $Q(\frac12,\frac12)=\frac23$. \qed
\end{example}
}

\begin{figure}[h!]
\begin{tabular}{cc}
\begin{tikzpicture}[scale=0.4]
\fill[black!01] (1,1) rectangle (12,12);
\fill[red!12] (6.5,6.5) rectangle (12,12);
\fill[lincolngreen!15] (1,1) rectangle (6.5,6.5);
\fill[goldenpoppy!25] (1,6.5) rectangle (6.5,12);
\fill[blue!07] (6.5,1) rectangle (12,6.5);
\draw[black!50] (1,1) rectangle (12,12);
\draw (0.3,12.5)   node {{\scriptsize$(0,0)$}};
\draw (0.3,0.3)   node {{\scriptsize$(1,0)$}};
\draw (12.5,12.5)   node {{\scriptsize$(0,1)$}};
\draw (12.5,0.3)   node {{\scriptsize$(1,1)$}};
\draw[tue] (12.75,7.15)   node {{$\mathbf{R_2}$}};
\draw[safetyorange(blazeorange)] (0,7.15)   node {{$\mathbf{R_1}$}};
\draw[mediumjunglegreen] (0,5.75)   node {{$\mathbf{R_4}$}};
\draw[blue] (12.75,5.75)   node {{$\mathbf{R_3}$}};
\draw[black] (0,6.5) -- (13,6.5);
\draw[black] (6.5,1) -- (6.5,12);
\draw[dotted] (1,2.83) -- (12,2.83);
\draw[dotted] (1,4.66) -- (12,4.66);
\draw[dotted] (1,8.33) -- (12,8.33);
\draw[dotted] (1,10.16) -- (12,10.16);
\draw[dotted] (2.83,1) -- (2.83,12);
\draw[dotted] (4.66,1) -- (4.66,12);
\draw[dotted] (8.33,1) -- (8.33,12);
\draw[dotted] (10.16,1) -- (10.16,12);
\draw[dotted] (6.5,1) -- (6.5,12);
\draw[dotted] (1,6.5) -- (12,6.5);
\draw (1.915,1.915)   node {{\scriptsize$1/3$}};
\draw (1.915,3.745)   node {{\scriptsize$0$}};
\draw (1.915,5.575)   node {{\scriptsize$0$}};
\draw (1.915,7.405)   node {{\scriptsize$0$}};
\draw (1.915,9.235)   node {{\scriptsize$1/3$}};
\draw (1.915,11.065)   node {{\scriptsize$0$}};
\draw (3.745,1.915)   node {{\scriptsize$0$}};
\draw (3.745,3.745)   node {{\scriptsize$1/3$}};
\draw (3.745,5.575)   node {{\scriptsize$0$}};
\draw (3.745,7.405)   node {{\scriptsize$1/3$}};
\draw (3.745,9.235)   node {{\scriptsize$-1/3$}};
\draw (3.745,11.065)   node {{\scriptsize$1/3$}};
\draw (5.575,1.915)   node {{\scriptsize$0$}};
\draw (5.575,3.745)   node {{\scriptsize$0$}};
\draw (5.575,5.575)   node {{\scriptsize$1/3$}};
\draw (5.575,7.405)   node {{\scriptsize$0$}};
\draw (5.575,9.235)   node {{\scriptsize$1/3$}};
\draw (5.575,11.065)   node {{\scriptsize$0$}};
\draw (7.405,1.915)   node {{\scriptsize$0$}};
\draw (7.405,3.745)   node {{\scriptsize$2/3$}};
\draw (7.405,5.575)   node {{\scriptsize$0$}};
\draw (7.405,7.405)   node {{\scriptsize$0$}};
\draw (7.405,9.235)   node {{\scriptsize$0$}};
\draw (7.405,11.065)   node {{\scriptsize$2/3$}};
\draw (9.235,1.915)   node {{\scriptsize$2/3$}};
\draw (9.235,3.745)   node {{\scriptsize$-2/3$}};
\draw (9.235,5.575)   node {{\scriptsize$2/3$}};
\draw (9.235,7.405)   node {{\scriptsize$0$}};
\draw (9.235,9.235)   node {{\scriptsize$2/3$}};
\draw (9.235,11.065)   node {{\scriptsize$0$}};
\draw (11.065,1.915)   node {{\scriptsize$0$}};
\draw (11.065,3.745)   node {{\scriptsize$2/3$}};
\draw (11.065,5.575)   node {{\scriptsize$0$}};
\draw (11.065,7.405)   node {{\scriptsize$2/3$}};
\draw (11.065,9.235)   node {{\scriptsize$0$}};
\draw (11.065,11.065)   node {{\scriptsize$0$}};
\end{tikzpicture} &
\begin{tikzpicture}[scale=0.4]
\fill[black!01] (1,1) rectangle (12,12);
\fill[red!12] (6.5,6.5) rectangle (12,12);
\fill[lincolngreen!15] (1,1) rectangle (6.5,6.5);
\fill[goldenpoppy!25] (1,6.5) rectangle (6.5,12);
\fill[blue!07] (6.5,1) rectangle (12,6.5);
\draw[black!50] (1,1) rectangle (12,12);
\draw (0.3,12.5)   node {{\scriptsize$(0,0)$}};
\draw (0.3,0.3)   node {{\scriptsize$(1,0)$}};
\draw (12.5,12.5)   node {{\scriptsize$(0,1)$}};
\draw (12.5,0.3)   node {{\scriptsize$(1,1)$}};
\draw[black!30] (1,6.5) -- (12,6.5);
\draw[black!30] (6.5,1) -- (6.5,12);
\draw[dotted] (1,2.83) -- (12,2.83);
\draw[dotted] (1,4.66) -- (12,4.66);
\draw[dotted] (1,8.33) -- (12,8.33);
\draw[dotted] (1,10.16) -- (12,10.16);
\draw[dotted] (2.83,1) -- (2.83,12);
\draw[dotted] (4.66,1) -- (4.66,12);
\draw[dotted] (8.33,1) -- (8.33,12);
\draw[dotted] (10.16,1) -- (10.16,12);
\draw[dotted] (6.5,1) -- (6.5,12);
\draw[dotted] (1,6.5) -- (12,6.5);
\draw[black, thick] (1,8.33) -- (4.66,12);
\draw[black, thick] (2.83,6.5) -- (6.5,10.16);
\draw[red,dashed,thick] (2.83,10.16) -- (4.66,8.33);
\draw[black, thick] (8.33,1) -- (12,4.66);
\draw[black, thick] (6.5,2.83) -- (10.16,6.5);
\draw[red, dashed, thick] (10.16,2.83) -- (8.33,4.66);
\draw[black, thick] (1,1) -- (6.5,6.5);
\draw[black, thick] (12,6.5) -- (6.5,12);
\end{tikzpicture} 
\end{tabular}
\centering
\caption{\blue{Two examples of patchwork constructions with background function $C$ (a proper copula) and partition as specified in Example~\ref{Ex:patch}. Left: alternating bistochastic matrix corresponding to a patchwork discrete quasi-copula compatible with the background function $C$. Right: support of a full domain quasi-copula constructed through patchwork on the same partition The black lines indicate the positive mass, while the red dashed lines indicate the negative mass.}}
\label{fig:expatch}
\end{figure}

\blue{We now again assume that the unit square $I^2=[0,1]^2$ is partitioned}  into $m \in \mathbb{N}$ disjoint rectangles $R_k$, with $k=1,\ldots,m$. For $k=1,\ldots,m$, we consider $(Q_{A_k}, Q_{B_k})$ to be a pair of \blue{(discrete)} quasi-copulas. 
We further assume that the probability mass on the unit square is distributed according to a proper copula $C$. Equivalently, in each rectangle $R_k$ of the partition, the mass is non-negative and equals $V_C(R_k)$. 
We now consider two \blue{(discrete)} quasi-copulas $Q_{A}$ and $Q_{B}$, constructed as patchworks of the \blue{(discrete)} quasi-copulas $Q_{A_k}$s and $Q_{B_k}$s, respectively. \blue{For the boundary functions we may take the values of the given copula $C$ on the edges of $R_k$, or we may use piecewise linear functions, or any other univariate functions that satisfy the boundary conditions (i)-(v) on each $R_k$ and coincide on the intersections of various $R_k$. In the patchwork construction, we use the same boundary functions on each rectangle $R_k$ for both $Q_A$ and $Q_B$.}

The underlying idea of a patchwork construction is to redistribute the mass $V_C(R_k)$ of each rectangle $R_k$ according to the \blue{(discrete)} quasi-copula $Q_{A_k}$. 
As proved in \cite[Thm. 5]{K-BKOS}, the function $Q_{A}$ obtained by gluing the components of each patchwork together is a proper quasi-copula. In a similar way, we can construct a \blue{(discrete)} quasi-copula $Q_{B}$ such that, for all $k=1,\ldots,m$, its mass on $R_k$, i.e., $V_C(R_k)$, is spread according to $Q_{B_k}$.

The following theorem states that a pair $(Q_{A},Q_{B})$ constructed as explained above inherits \blue{the property of being an imprecise copula and} the duality \blue{from} the pairs $(Q_{A_k},Q_{B_k})$.

\begin{theorem}
\label{Th:patch}
If $(Q_{A_k},Q_{B_k})$, with $k=1,\ldots,m 
$, are \blue{(discrete)} imprecise copulas then $(Q_{A},Q_{B})$ constructed through patchwork techniques is also a \blue{(discrete)}  imprecise copula. 

Furthermore, if $(Q_{A_k},Q_{B_k})$, with $k=1,\ldots,m 
$, are dual pairs of \blue{(discrete)} quasi-copulas for all $k$ such that $V_C(R_k)>0$, then the pair $(Q_{A},Q_{B})$ constructed through patchwork techniques is also a dual pair of \blue{(discrete)} quasi-copulas.
\end{theorem}

\blue{The proof is the same for the discrete and the full domain cases. In it, we do not use the adjective 'discrete' in parentheses for clarity. All rectangles are assumed to have their vertices in the domain of quasi-copulas considered.}

\begin{proof}
First, we notice that $(Q_A,Q_B)$ is an imprecise copula. Indeed, $Q_A$ and $Q_B$ are quasi-copulas, and $Q_A\leq Q_B$ since $Q_{A_k} \leq Q_{B_k}$ on each rectangle $R_k$. Conditions \blue{in Equation} (IC1) hold since they hold for $C$ and all $(Q_{A_k},Q_{B_k})$. If $R\in\R$ is a rectangle that lies entirely in one of $R_k$ then \blue{inequalities in Equation } (IC2) hold since they hold for ($Q_{A_k},Q_{B_k})$. Suppose that $R\in\R$ is such that it intersects nontrivially (i.e. not just along an edge of the patchwork) at least two of the rectangles $R_k$. We write $R=[i,j]\times [r,s]$. Consider the first of the inequalities in (IC2). Assume that the corner $(i,r)$ of $R$ lies in $R_k$ and that $R\cap R_k=[i,r']\times [j,s']$. Then we have
\begin{eqnarray*}
    Q_B(i,r)+Q_A(j,s)-Q_A(i,s)-Q_A(j,r)&=&r_k(Q_{B_k}(i,r')+Q_{A_k}(j,s')
    -Q_{A_k}(i,s')-Q_{A_k}(j,r'))\\
    &+&\sum_{l=1,l\neq k}^m r_l V_{Q_{A_l}}(R\cap R_l),
\end{eqnarray*}
where $r_l=V_C(R_l)$ and hence it is a nonnegative number for each $l$. Now, the first summand on the right-hand side of the above equality is nonnegative since $(Q_{A_k},Q_{B_k})$ is an imprecise copula and the volumes in the summation are nonnegative by Proposition \ref{prop:char-qc} since the rectangles $R\cap R_l$ have at least one boundary on the edges of the patchwork. Thus, all the summands on the right-hand side of the above equality are nonnegative, and the first inequality in (IC2) follows. Similar arguments show that the other three inequalities of (IC2) hold.

To prove that $(Q_A,Q_B)$ is a dual pair, we need to verify that $(Q_A)_M = Q_A - D_M^A = Q_B$ and $(Q_B)_O= Q_B+ D_O^B = Q_A$. We now show that the functions $D_M^A$ and $D_O^B$, i.e. the main defect of $Q_A$ and the opposite defect of $Q_B$, are straightforward combinations of the functions $D_M^{A_k}$ and, respectively, $D_O^{B_k}$, each on $R_k$ for $k=1, \ldots, m$. The only difference between the original $D_M^{A_k}$ and the corresponding sub-function in $D_M^{A}$ is a re-scaling factor $r_k$ which depends on $V_C(R_k)$ for each $R_k$. 
We notice that the rescaling factor $r_k$ is the same for $D_M^{A_k}$ and $D_O^{B_k}$. 
Indeed, we define the patchwork quasi-copulas $A$ and $B$ on the same partition of the unit square and the same distribution of the probability mass on $I^2$ according to a proper copula $C$. 

Without loss of generality, we here focus on the $D_M$ function, i.e. the main defect. The same arguments hold for the function $D_O$, i.e., the opposite defect.

We recall that for any $(r,s)\in [0,1]^2$, the value of the main defect function $D_M(r,s)$ is only affected by the rectangles on the main diagonal, i.e., with $(r,s)$ as a vertex, which have negative mass. 
We now show that we can restrict our attention to rectangles that lie entirely in one of the rectangles of the partition, i.e., they can only be like $\tilde{R}_b$ in Figure~\ref{fig:ex1}. 

\begin{figure}
\begin{tabular}{cc}
\begin{tikzpicture}[scale=0.4]
\fill[black!01] (1,1) rectangle (12,12);
\fill[red!15] (8,5.5) rectangle (12,12);
\fill[goldenpoppy!30] (1,5.5) rectangle (8,12);
\fill[lincolngreen!15] (1,1) rectangle (3.75,5.5);
\fill[blue!05] (3.75,1) rectangle (12,5.5);
\draw[black!50] (1,1) rectangle (12,12);
\draw (0.3,12.5)   node {{\scriptsize$(0,0)$}};
\draw (0.3,0.3)   node {{\scriptsize$(1,0)$}};
\draw (12.5,12.5)   node {{\scriptsize$(0,1)$}};
\draw (12.5,0.3)   node {{\scriptsize$(1,1)$}};
\draw[tue] (11,11.15)   node {{$\mathbf{R_2}$}};
\draw[safetyorange(blazeorange)] (1.75,11.15)   node {{$\mathbf{R_1}$}};
\draw[mediumjunglegreen] (1.75,1.75)   node {{$\mathbf{R_4}$}};
\draw[blue] (11,1.75)   node {{$\mathbf{R_3}$}};
\draw[black!20] (8,5.5) -- (12,5.5);
\draw[black!20] (3.75,1) -- (3.75,5.5);
\draw[dotted] (3.75,1) -- (3.75,12);
\draw[dotted] (1,8) -- (12,8);
\draw[black,pattern=north east lines, pattern color=black!50] (2,8) rectangle (3.75,10);
\draw[black,pattern=north east lines, pattern color=black!50] (3.75,3) rectangle (10,8);
\draw[black] (3,9)   node {{$\mathbf{\tilde{R}_b}$}};
\draw[black] (7,5.5)   node {{$\mathbf{\tilde{R}_a}$}};
\end{tikzpicture} &
\begin{tikzpicture}[scale=0.4]
\fill[black!01] (1,1) rectangle (12,12);
\fill[red!15] (8,5.5) rectangle (12,12);
\fill[goldenpoppy!30] (1,5.5) rectangle (8,12);
\fill[lincolngreen!15] (1,1) rectangle (3.75,5.5);
\fill[blue!05] (3.75,1) rectangle (12,5.5);
\draw[black!50] (1,1) rectangle (12,12);
\draw (0.3,12.5)   node {{\scriptsize$(0,0)$}};
\draw (0.3,0.3)   node {{\scriptsize$(1,0)$}};
\draw (12.5,12.5)   node {{\scriptsize$(0,1)$}};
\draw (12.5,0.3)   node {{\scriptsize$(1,1)$}};
\draw[tue] (11,11.15)   node {{$\mathbf{R_2}$}};
\draw[safetyorange(blazeorange)] (1.75,11.15)   node {{$\mathbf{R_1}$}};
\draw[mediumjunglegreen] (1.75,1.75)   node {{$\mathbf{R_4}$}};
\draw[blue] (11,1.75)   node {{$\mathbf{R_3}$}};
\draw[black!20] (8,5.5) -- (12,5.5);
\draw[black!20] (3.75,1) -- (3.75,5.5);
\draw[dotted] (3.75,1) -- (3.75,12);
\draw[dotted] (1,8) -- (12,8);
\draw[safetyorange(blazeorange), thick,pattern=north east lines, pattern color=red!70] (2,8) rectangle (3.75,10);
\draw[red, thick,pattern=north east lines, pattern color=red!70] (8,5.5) rectangle (10,8);
\draw[safetyorange(blazeorange), thick,pattern=north east lines, pattern color=safetyorange(blazeorange)!70] (3.75,8) rectangle (8,5.5);
\draw[blue, thick,pattern=north east lines, pattern color=blue!60] (10,5.5) rectangle (3.75,3);
 \draw[black, very thick] (3.65,7.9) rectangle (3.85,8.1);
\draw[black!20] (8,5.5) -- (12,5.5);
\draw[black!20] (3.75,1) -- (3.75,5.5);
\draw[dotted] (3.75,1) -- (3.75,12);
\draw[dotted] (1,8) -- (12,8);
\draw[black] (5.5,7)   node {{$\mathbf{\tilde{R}_{a1}}$}};
\draw[black] (9,7)   node {{$\mathbf{\tilde{R}_{a2}}$}};
\draw[black] (7.5,4.25)   node {{$\mathbf{\tilde{R}_{a3}}$}};
\draw[black] (3,9)   node {{$\mathbf{\tilde{R}_b}$}};
\end{tikzpicture}
\end{tabular}
\centering
\caption{Left: A graphical representation of a unit square partition with $m=4$. The patterned areas $\tilde{R}_a$ and $\tilde{R}_b$ are possible rectangles that contributes to $\R_{\nearrow}(r,s)$ and $\R_{\swarrow}(r,s)$. Right: the decomposition of the rectangles $\tilde{R}_{a}$ and $\tilde{R}_{b}$ according to the partition determined by $R_1,R_2,R_3,R_4$.}
\label{fig:ex1}
\end{figure}

Let $\tilde{R}$ be a rectangle such that $\tilde{R} \cap R_k \neq \emptyset$ for at least two indexes in $\{1,\ldots,m\}$ (see, for example, $\tilde{R}_a$ in Figure~\ref{fig:ex1}).
Then, the volume of $\tilde{R}$ can be decomposed as follows:
\[V_A(\tilde{R})= \sum_{k=1}^{m} V_A(\tilde{R}_k),\]
where $\tilde{R}_k= \tilde{R} \cap R_k$ for $k=1,\ldots m$. By construction, $(Q_A)_{|_{R_k}}=r_k Q_{A_k}$, where $r_k\geq 0$ is a rescaling factor that only depends on $V_C(R_k)$. As a consequence, $V(\tilde{R}_i)$ can only be negative if $r_k>0$ and $V_{Q_{A_i}}(\tilde{R}_i)$ is negative. However, $V_{{Q_{A_i}}}(\tilde{R}_i)$ must be non-negative because ${Q_{A_i}}$ is a quasi-copulas and, as such, it has non-negative mass on any rectangle with at least one edge on the border by Proposition \ref{prop:char-qc}.
Thus, the only rectangles that can contribute to a nonzero value of the function $D_M$ are those for which it exists one and only one $\tilde{k}$ such that they lie entirely in $R_{\tilde{k}}$ with $V_C(R_{\tilde{k}})>0$ and they do not touch its border.
These are the same rectangles that bring a negative value in the $D_M^{A_i}$ function. As a consequence, the function $D_M$ is a patchwork of the $D_M^{A_i}$ functions up to rescaling factors $r_k$. The same holds for the function $D_O$ which can be obtained by gluing all the $D_O^{B_i}$ functions re-weighted according to the same constants $r_k$. If $({Q_{A_k}},{Q_{B_k}})$ is a dual pair for every $k=1, \ldots,m$ such that $r_k>0$ then $D_M^{A_k}=D_O^{B_k}$ and, as a consequence, $D_M^A = D_O^B$. Hence, $(Q_A,Q_B)$ is also a dual pair. 
\end{proof}

Theorem~\ref{Th:patch} allows for combining dual pairs of quasi-copulas to obtain new dual pairs. In the remainder of the paper, we complement this result by giving construction techniques of dual pairs of quasi-copulas and study their properties. 

\section{Dense alternating sign matrices and dual pairs}
\label{sec:4}
We now move on to the second contribution of this paper and study the connection between dual pairs of \blue{discrete} quasi-copulas and a special class of ASM. In particular, we here show how to construct sequences of dual pairs of \blue{discrete} quasi-copulas in correspondence with a subset of ASM that are called \emph{dense} \cite{BS}.

\subsection{Dense alternating sign matrices}

\medskip

\begin{definition}
An ASM is called \emph{dense} if it has no zero entries between any two nonzero entries in either a column or in a row. 
\end{definition}

First we consider a special class of dense matrices that were studied by Brualdi and Schroeder \cite{BS}. Following them, we denote by $F_n^k$, $k=1,2,\ldots,n$, $n\ge 1$, a dense matrix that has all entries in four stripes $(k,1),(k-1,2)$,$\ldots, (1,k)$, $(1,k),(2,k+1),\ldots, (n-k+1,n)$, $(n-k+1,n),(n-k+2,n-1),\ldots,(n,n-k+1)$ and $(n,n-k+1),(n-1,n-k),\ldots, (1,k)$ equal to $1$. \blue{All entries between these stripes are nonzero by the denseness condition. Since the number of entries between the stripes is always odd, it is always possible to obtain an ASM.} In particular, $F_n^1$ is equal to the identity matrix $I_n$, and $F_n^n$ is the antidiagonal matrix \blue{$H_n$. These two matrices correspond to the Fr\'echet upper and lower bound, respectively. All other matrices $F_n^k$ are proper ASM.} For instance, when $n=4$ or $n=5$ we have
$$F_4^1=I_4=\begin{pmatrix}
    1 & 0 & 0 & 0 \\
    0 & 1 & 0 & 0\\
    0 & 0 & 1 & 0\\
    0 & 0 & 0 & 1
\end{pmatrix}, 
F_4^2=\begin{pmatrix}
    0 & 1 & 0 & 0 \\
    1 & -1 & 1 & 0\\
    0 & 1 & -1 & 1\\
    0 & 0 & 1 & 0
\end{pmatrix}, F_4^4=
\begin{pmatrix}
    0 & 0 & 0 & 1 \\
    0 & 0 & 1 & 0\\
    0 & 1 & 0 & 0\\
    1 & 0 & 0 & 0
\end{pmatrix}, $$
$$F_5^2=\begin{pmatrix}
    0 & 1 & 0 & 0 & 0 \\
    1 & -1 & 1 & 0 & 0\\
    0 & 1 & -1 & 1 & 0\\
    0 & 0 & 1 & -1 &  1\\
    0 & 0 & 0 & 1 & 0
\end{pmatrix}, 
F_5^3=\begin{pmatrix}
    0 & 0 & 1 & 0 & 0 \\
    0 & 1 & -1 & 1 & 0\\
    1 & -1 & 1 & -1 & 1\\
    0 & 1 & -1 & 1 & 0 \\
    0 & 0 & 1 & 0 & 0
\end{pmatrix}.$$
The quasi-copulas $Q(F_n^k)\in\Q_n'$ for the above ASM are
$$Q(F_4^1)=\begin{pmatrix}
    1 & 1 & 1 & 1 \\
    1 & 2 & 2 & 2\\
    1 & 2 & 3 & 3\\
    1 & 2 & 3 & 4
\end{pmatrix}=M_4,\  
Q(F_4^2)=\begin{pmatrix}
    0 & 1 & 1 & 1 \\
    1 & 1 & 2 & 2\\
    1 & 2 & 2 & 3\\
    1 & 2 & 3 & 4
\end{pmatrix},\  
Q(F_4^4)=
\begin{pmatrix}
    0 & 0 & 0 & 1 \\
    0 & 0 & 1 & 2\\
    0 & 1 & 2 & 3\\
    1 & 2 & 3 & 4
\end{pmatrix}=W_4, $$
$$Q(F_5^2)=
\begin{pmatrix}
    0 & 1 & 1 & 1 & 1 \\
    1 & 1 & 2 & 2 & 2\\
    1 & 2 & 2 & 3 & 3\\
    1 & 2 & 3 & 3 & 4\\
    1 & 2 & 3 & 4 & 5
\end{pmatrix}, 
Q(F_5^3)=\begin{pmatrix}
    0 & 0 & 1 & 1 & 1 \\
    0 & 1 & 1 & 2 & 2\\
    1 & 1 & 2 & 2 & 3\\
    1 & 2 & 2 & 3 & 4\\
    1 & 2 & 3 & 4 & 5
\end{pmatrix}.$$

\medskip

\blue{Next we consider a} sum of two consecutive matrices $F_n^{k-1}$ and $F_n^k$.

\blue{\begin{proposition}\label{consec_F_n^k}
    The sum $F_n^{k-1}+F_n^k$ for $2\leq k\leq n$ is a nonnegative matrix. Furthermore, it is the sum of two permutation matrices.
\end{proposition}
}

\textcolor{black}{\begin{proof}
If $k<\frac{n}{2}$ then the negative entries of $F_n^{k-1}$ corresponds to the positive entries of the matrix $F_n^k$. Therefore, they cancel out if we consider their sum, resulting in the following:
\begin{equation}\label{example F_n^k}
F_n^{k-1}+F_n^k=\left(\begin{array}{ccccccccccc}
 0 & \cdots  & 0 & 1 & 1 & 0 & & \cdots & \cdots & & 0\\
 0 & \iddots & 1 & 0 & 0 & 1 & & \ddots & \ddots & & 0\\
 & \iddots & \iddots & & & & \ddots & \ddots &\ddots & \ddots & \\
 1 & 0 &  & \cdots & & \cdots & & 1 & 0 & \cdots & 0\\
 1 & 0 &  & \cdots & & \cdots & & 0 & 1 & \cdots & 0\\
 & \ddots & \ddots & & & & & \ddots & \ddots & \ddots & \\
 0 & \cdots & 1 & 0 & & \cdots & & \cdots & & 0 & 1\\
 0 & \cdots & 0 & 1 & & \cdots & & \cdots & & 0 & 1\\
 & \ddots & \ddots & \ddots & \ddots & & & & \iddots & \iddots & \\
 0 & & \ddots & \ddots & & 1 & 0 & 0 & 1 & \iddots & 0\\
 0 & & \cdots & \cdots & & 0 & 1 & 1 & 0 & \cdots & 0
\end{array}\right).
\end{equation}
A similar calculation shows that the observation holds also for $k\ge\frac{n}{2}$. 
Observe also that one can write the sum $F_n^{k-1}+F_n^k$ as a sum of two permutation matrices by replacing every second nonzero entry in the clockwise direction around the center of the matrix by a zero. 
\end{proof}
}

In particular, for the case when $n=6$ and $k=5$, we have 
$$F^4_6+F^5_6=\begin{pmatrix}
    0 & 0 & 0 & 1 & 1 & 0 \\
    0 & 0 & 1 & 0 & 0 & 1 \\
    0 & 1 & 0 & 0 & 0 & 1 \\
    1 & 0 & 0 & 0 & 1 & 0 \\
    1 & 0 & 0 & 1 & 0 & 0 \\
    0 & 1 & 1 & 0 & 0 & 0 
\end{pmatrix},
$$
\blue{and}
$$F^4_6+F^5_6=\begin{pmatrix}
    0 & 0 & 0 & 1 & 0 & 0 \\
    0 & 0 & 0 & 0 & 0 & 1 \\
    0 & 1 & 0 & 0 & 0 & 0 \\
    0 & 0 & 0 & 0 & 1 & 0 \\
    1 & 0 & 0 & 0 & 0 & 0 \\
    0 & 0 & 1 & 0 & 0 & 0 
\end{pmatrix}+\begin{pmatrix}
    0 & 0 & 0 & 0 & 1 & 0 \\
    0 & 0 & 1 & 0 & 0 & 0 \\
    0 & 0 & 0 & 0 & 0 & 1 \\
    1 & 0 & 0 & 0 & 0 & 0 \\
    0 & 0 & 0 & 1 & 0 & 0 \\
    0 & 1 & 0 & 0 & 0 & 0 
\end{pmatrix}.$$

Each ASM can be written as the sum of its positive and negative parts: $A=A^++A^-$. For instance,
$$F_5^{3+}=\begin{pmatrix}
    0 & 0 & 1 & 0 & 0 \\
    0 & 1 & 0 & 1 & 0\\
    1 & 0 & 1 & 0 & 1\\
    0 & 1 & 0 & 1 & 0 \\
    0 & 0 & 1 & 0 & 0
\end{pmatrix}\ \mathrm{and}\ F_5^{3-}=\begin{pmatrix}
    0 & 0 & 0 & 0 & 0 \\
    0 & 0 & -1 & 0 & 0\\
    0 & -1 & 0 & -1 & 0\\
    0 & 0 & -1 & 0 & 0 \\
    0 & 0 & 0 & 0 & 0
\end{pmatrix}.$$

\begin{definition}
    A dense ASM $A$ is called \emph{irreducible} if it is equal to the $1\times 1$ matrix $[1]$, or it is equal to $F_n^k$ for some $n\ge 3$ and $2\le k \le n-1$.
\end{definition}

\begin{theorem}\label{structure of dense ASM}
    Suppose that $A$ is a dense ASM of size $n$. Then there exist an integer $l$, irreducible dense ASM matrices $A_r$, $r=1,2,\ldots, l$, and a permutation $\sigma$ of $\{1,2,\ldots,l\}$ such that $A$ has a block form
    $$A=\begin{pmatrix}
 A_{11} & A_{12} & \cdots  & A_{1l} \\
 A_{21} & A_{22} & \cdots  & A_{2l} \\
 \vdots & \vdots & & \vdots\\
 A_{l1} & A_{l2} & \cdots  & A_{ll} 
\end{pmatrix},$$
where for each $r=1,2,\ldots,l$, we have $A_{r,\sigma(r)}=A_r$  and $A_{rs}=0$ for $s\neq \sigma(r)$. Here $A_r=F_{m_r}^{k_r}$ for some integers $m_r$ such that $\sum_{r=1}^l m_r=n$ and either $m_r=k_r=1$ or $m_r\ge 3$ and $2\le k_r\le m_r-1$.
\end{theorem}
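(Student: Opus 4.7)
My plan is to build the decomposition combinatorially: identify blocks as connected components of the nonzero support of $A$, show each component occupies a square contiguous subregion, and then verify each block is an irreducible dense ASM.

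First, I would recall from the ASM partial-sum conditions that the first and last rows and columns of $A$ each have entries in $\{0,1\}$ and contain exactly one $1$; this will pin down behaviour at the boundary of every block. Next, define a graph $G$ on the nonzero entries of $A$ in which two entries are adjacent if and only if they share a row or column, and let $C_1,\ldots,C_l$ be its connected components. For each $r$, let $I_r$ (respectively $J_r$) be the set of rows (respectively columns) on which $C_r$ has entries. The crucial geometric step is to show that each $I_r$ and each $J_r$ is a contiguous interval in $\{1,\ldots,n\}$. I would argue by contradiction: if $I_r$ had a gap $i_1<i_2<i_3$ with $i_1,i_3\in I_r$ and $i_2\notin I_r$, then a $G$-path inside $C_r$ joining a row-$i_1$ entry to a row-$i_3$ entry must at some step use a column $j$ with nonzero entries strictly above and strictly below $i_2$; density of column $j$ forces a nonzero at $(i_2,j)$, which belongs to $C_r$ through column $j$, contradicting $i_2\notin I_r$. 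The argument for $J_r$ is symmetric.

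A short double-counting argument then gives $m_r:=|I_r|=|J_r|$: indeed $|I_r|=\sum_{i\in I_r}\sum_j a_{i,j}=\sum_{j\in J_r}\sum_{i} a_{i,j}=|J_r|$, since every nonzero in columns $J_r$ lies in rows $I_r$ and each column sums to $1$. The contiguous intervals $I_r$ partition $\{1,\ldots,n\}$ as every row of $A$ contains at least one nonzero, and similarly the $J_r$'s partition $\{1,\ldots,n\}$. Ordering the components by their row intervals and recording the positions of their column intervals defines the permutation $\sigma$ of the statement, and the sub-matrix $A_r:=A|_{I_r\times J_r}$ inherits the dense ASM axioms from $A$ with its nonzero support forming a single connected component by construction.

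The remaining step, and the main obstacle, is to show every connected dense ASM of size $m$ is $[1]$ (when $m=1$), is impossible (when $m=2$), or equals $F_m^k$ for some $k$ with $2\le k\le m-1$ (when $m\ge 3$). For $m\ge 2$, connectivity forces the unique $1$ of row $1$ to sit at some column $k$ with $2\le k\le m-1$, because placing it at $k=1$ or $k=m$ would leave that $1$ isolated in both its row and its column. ASM alternation combined with density then makes column $k$ acquire consecutive nonzero entries starting at row $1$ with alternating signs beginning and ending in $+1$; each $-1$ in column $k$ forces the surrounding row to extend on both sides of column $k$, and iterating this row-by-row propagation together with the symmetric propagations starting from column $1$, row $m$, and column $m$ pins down the nonzero support and signs of $A_r$ to coincide with those of $F_m^k$. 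This yields $A_r=F_{m_r}^{k_r}$ with the required constraints on $(m_r,k_r)$ and completes the proof.
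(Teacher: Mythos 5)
Your decomposition into connected components of the nonzero support is a genuinely different route from the paper's, which instead scans rows from top to bottom, tracks the number $b_i$ of nonzeros per row, and shows that between consecutive rows with $b_i=1$ this profile is unimodal with steps of $\pm 2$, cutting out one block $F_h^k$ at a time. Your first two stages are correct and arguably cleaner than the paper's: the contiguity of the row- and column-supports of each component (via the crossing column and density), the identity $|I_r|=|J_r|$ by double counting the unit row and column sums, and the resulting block structure with the permutation $\sigma$ are all sound. The claim that the top $1$ of a connected block of size $m\ge 2$ must sit in a column $k$ with $2\le k\le m-1$ is also fine, provided you add the one-line reason why column $1$ (resp.\ $m$) cannot contain a second nonzero entry: that entry would be a $-1$ occurring as the leftmost (resp.\ rightmost) nonzero of its row, which is forbidden.

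The gap is in the final step, which is the mathematical heart of the theorem. Your propagation argument (``each $-1$ forces the surrounding row to extend on both sides'') establishes only one inclusion: that the diamond-shaped support of $F_m^k$ is \emph{contained} in the support of the block. It never excludes extra nonzero entries. Concretely, after deducing $a_{2,k}=-1$ you must still rule out row $2$ having support $[k-3,k+1]$ or $[k-1,k+3]$, both of which are consistent with alternation, density, and $a_{2,k}=-1$. The missing ingredient is the upper bound: any $-1$ in row $i+1$ at column $j$ must have a $+1$ strictly above it in column $j$, so $j$ must already lie in the support of one of the rows $1,\dots,i$; combined with alternation and density this confines the support of row $i+1$ to grow by at most one column on each side of the support of row $i$, which is what actually forces the exact diamond shape. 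A further check is needed when the propagation reaches columns $1$ and $m$ (where the diamond must turn around rather than keep widening) and to see that the four propagations from the corners close up consistently at row $m$. All of this is fillable, but as written ``iterating \dots pins down the nonzero support and signs'' asserts the conclusion rather than proving it.
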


\begin{proof}
    We denote by $b_i$, $i=1,2,\ldots, n$ the number of nonzero entries in the $i$-th row of $A$. Note that $b_1=1$. Consider first the case $b_2=1$ as well. Then $a_{1,j_1}$ for some $j_1$ is the only nonzero entry in the first row, and at the same time it is the only nonzero entry in the $j_1$-th column of $A$. If the latter was not the case, then we would have $a_{2,j_1}=-1$ since $A$ is dense, and hence $b_2>1$, in contradiction with our assumption. 
    
    Assume now that $b_2>1$ and let $h\ge 3$ be the smallest index such that $b_h=1$. This $h$ exists since $b_n=1$. Consider now rows with indices $1,2,\ldots,h$. Since $A$ is dense the sequence $b_1,b_2,\ldots, b_h$ is unimodal and of form 
    \begin{equation}\label{unimodal seq}
        1,3,5,\ldots, 2g+1\ldots, 2g+1,2g-1,\ldots, 3,1
    \end{equation} 
    for some $g\ge 1$. We notice that if the sequence was not unimodal of the given form, then there would be a zero entry in a column between two nonzero entries. We denote by $f$ the number of $2g+1$ occurring in the sequence. Then we have $h=2g+f$. Since $A$ is dense, it follows that the nonzero entries in the first $h$ rows are necessarily in consecutive columns, say columns with indices from $j_1$ to $j_2-1$. Observe that $j_2-j_1=2g+f=h$ since the sequence \eqref{unimodal seq} requires $2g+1$ consecutive columns with nonzero entries for the first $g+1$ rows and then one additional column for each repetition of $2g+1$ as the sign of entries in the $A$ alternate. Together, there are $2g+1+f-1=h$ columns. Thus the block 
    $$\begin{pmatrix}
    a_{1j_1} & a_{1,j_1+1} & \cdots  & a_{1,j_2-1} \\
    a_{2j_1} & a_{2,j_1+1} & \cdots  & a_{2,j_2-1} \\
    \vdots & \vdots & & \vdots\\
    a_{hj_1} & a_{h,j_1+1} & \cdots  & a_{h,j_2-1} 
    \end{pmatrix}$$  
    of $A$ is of the form $F_h^k$, where either $k=g$ or $k=g+f-1$. Since $A$ is dense, it follows that the entries of $A$ in columns from $j_1$ to $j_2-1$ and in rows $h+1$ to $n$ are equal to zero. If $n=h$ we are done. Otherwise, it follows that $b_{h+1}=1$, and we consider the subsequence $b_{h+1},\ldots, b_{h'}$, where $h'=h+1$ if $b_{h'2}=1$ or $h'$ is the smallest index larger than $h+1$ such that $b_{h'}=1$ if $b_{h'}=1$. We repeat the above arguments to obtain the next block of the form $F_h^k$ until all rows are used. Let $l$ denote the number of subsequnces we considered in the process. Then $A$ has $l\times l$ block form, where in each row and column exactly one of the blocks is nonzero. We denote by $\sigma$ the permutation of $\{1,2,\ldots,l\}$ such that $A_{r,\sigma(r)}$ are the nonzero blocks of $A$.
\end{proof}

\subsection{Defect matrices of a dense ASM}

For a proper dense ASM $A$ the defect matrices of $A$, or the corresponding proper quasi-copula $Q(A)$, have a special form that will help us to determine the dual pairs. For two matrices $A$ and $B$ we denote by $A\wedge B$ the entry-wise minimum of $A$ and $B$.

\begin{proposition}\label{V_Q=-1}
    Suppose that $A$ is a dense ASM of size $n$ and $Q=Q(A)\in\Q_n'$ is the corresponding \blue{discrete} quasi-copula. Then we have $V_Q(R)\ge -1$ for any rectangle $R=[i,j]\times [k,l]$ in $\R$ and $V_Q(R)=-1$ if and only if both $j-i$ and $l-k$ are odd numbers and the values $a_{i,k}$, $a_{j,l}$, $a_{j,k}$ and $a_{i,l}$ of $A$ at the corners of $R$ are all equal to $-1$. 
\end{proposition}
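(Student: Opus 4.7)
The idea is to reduce the claim to an irreducible dense ASM via Theorem~\ref{structure of dense ASM} and then analyze $F_m^\kappa$ directly using the alternating-sign pattern on its diamond support.

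\textbf{Reduction to the irreducible case.} Put $A$ in the block form of Theorem~\ref{structure of dense ASM}, so that the only nonzero blocks are $A_{r,\sigma(r)} = F_{m_r}^{k_r}$. For any rectangle $R \in \R$, the volume $V_Q(R)$ decomposes into a sum over the nonzero blocks. If $R$ meets more than one row block, then for each block it meets, the restriction of $R$ contains either the top or the bottom row of the corresponding $F_{m_r}^{k_r}$---those boundary rows carry only $0$ and a single $+1$ entry---and Proposition~\ref{prop:char-qc} applied inside the block forces the restriction to have nonnegative volume. The same argument works symmetrically when $R$ meets more than one column block. Hence $V_Q(R) \ge 0$ in all such cases, and it remains to treat $R$ contained in a single nonzero block, i.e.\ the case $A = F_m^\kappa$ with $m \ge 3$ and $2 \le \kappa \le m-1$.

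\textbf{Analysis of $F_m^\kappa$.} Let $D$ be the diamond support of $F_m^\kappa$, so that $a_{r,s} = (-1)^{r+s-\kappa-1}$ on $D$ and $a_{r,s} = 0$ off $D$, and let $B$ denote the sub-block of $R = [i,j]\times[k,l]$. If $B \subseteq D$, a direct computation factors
\[
V_Q(R) = (-1)^{\kappa+1}\Bigl(\sum_{r=i+1}^{j}(-1)^r\Bigr)\Bigl(\sum_{s=k+1}^{l}(-1)^s\Bigr),
\]
which vanishes unless both $j-i$ and $l-k$ are odd and equals $\pm 1$ in the odd case; moreover under these odd parities the four corner entries of $B$ share a common value equal to $V_Q(R)$, so $V_Q(R)=-1$ precisely when the four corner entries are $-1$. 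If instead $B \not\subseteq D$, then by convexity of $D$ at least one corner cell of $B$ lies outside $D$ and equals $0$. Because in any dense ASM row the partial-sum pattern is $0,1,0,1,\ldots,0,1$, the row-segment sum $R_r := \sum_{s=k+1}^l a_{r,s}$ satisfies $R_r \in \{-1,0,1\}$ and $R_r = -1$ forces $a_{r,k+1}=a_{r,l}=-1$ (both necessarily in $D$); a symmetric statement holds column-wise. Using these observations one verifies that sufficiently many rows and columns of $B$ are forced to contribute nonnegatively to yield $V_Q(R) \ge 0$.

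\textbf{Main obstacle.} The principal technical step is the boundary-crossing case $B \not\subseteq D$: the interior rows of $B$ that can still attain $R_r=-1$ must not accumulate to push $V_Q(R)$ below $0$. To handle this one splits into cases according to which of the four outside-$D$ corner regions the corners of $B$ occupy, and combines the convexity of $D$, the parities of the indices, and the row/column partial-sum properties of dense ASMs to rule out every configuration producing $V_Q(R)<0$.
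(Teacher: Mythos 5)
Your reduction to a single irreducible block via Theorem~\ref{structure of dense ASM} is sound (and is a step the paper does not need: its argument works directly on any dense ASM), and your factorization
$V_Q(R)=(-1)^{\kappa+1}\bigl(\sum_{r}(-1)^r\bigr)\bigl(\sum_{s}(-1)^s\bigr)$
for a rectangle contained in the diamond support correctly recovers the paper's first case (all four corner entries nonzero), where the count of $+1$'s versus $-1$'s settles both the bound and the equality criterion.

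However, there is a genuine gap: the case $B\not\subseteq D$ --- equivalently, the case where at least one corner entry of the rectangle is zero --- is precisely where the substantive work of the proposition lies, and you do not prove it; you only name it as ``the main obstacle'' and outline a case split that you never carry out. The difficulty is real and not a routine verification. Your row-segment observation ($R_r\in\{-1,0,1\}$, and $R_r=-1$ forces $a_{r,k+1}=a_{r,l}=-1$) shows that the rows with segment sum $-1$ have a common parity and are interleaved with rows of segment sum $+1$, which does yield $V_Q(R)\ge -1$; but it does not by itself exclude $V_Q(R)=-1$ when, say, the top row of $B$ has segment sum $0$ and a zero corner, so the ``only if'' direction of the equality criterion is left unestablished, and ruling out such configurations needs the two-dimensional (row \emph{and} column) structure, not just row sums. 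The paper closes exactly this case with a short induction on the number of rows: peel off the top row (whose contribution lies in $\{0,1\}$ because one of its corner entries vanishes), and observe that the remaining rectangle either again has a zero corner (induction hypothesis) or has all corners nonzero with at least one equal to $+1$ (first case), hence nonnegative volume. You would need to supply an argument of comparable completeness --- the inductive row-peeling is the cleanest way to do it --- before your proof can be considered finished.
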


\begin{proof}
    First, consider a rectangle with all four corner values $a_{i,k}$, $a_{j,l}$, $a_{j,k}$ and $a_{i,l}$ non-zero. If either of $j-i$ and $l-k$ is even, then the number of $1$s and the number of $-1$s in summation \eqref{V_Q(A)} are equal, and so $V_Q(R)=0.$ If both $j-i$ and $l-k$ are odd, then $V_Q(R)$ is equal to $1$ or to $-1$. The latter is the case if and only if the number of $-1$s is larger than the number of $1$s in the summation \eqref{V_Q(A)}, and this is possible if and only if all corner values are equal to $-1$, 
   
    Next, we assume that at least one of the corner values $a_{i,k}$, $a_{j,l}$, $a_{j,k}$ and $a_{i,l}$ is equal to $0$. We want to prove that, in this case, $V_Q(R)\ge 0$. We proceed by induction on $j-i$. If $j-i=0$ then at least one of $a_{i,k}$ and $a_{i,l}$ is equal to zero, and so $V_Q(R)=\sum_{s=0}^{l-k} a_{i,k+s}\in\{0,1\}$ by the alternating property of the entries in an ASM. If $j-i\ge 1$ then decompose $R$ into a union of two smaller rectangles. If $a_{i,k}=0$ or $a_{i,l}=0$, then we consider $R_1=[i,i]\times [k,l]$ and $R_2=[i+1,j]\times [k,l]$, which results in $V_Q(R_1)\in\{0,1\}$ as in the case $j-i=0$. If $R_2$ has a zero corner value, then its volume $V_Q(R_2)$ is non-negative by induction. Otherwise, all the corner values of $R_2$ are non-zero, implying that $a_{i+1,k}=1$ or $a_{i+1,l}=1$, since $a_{i,k}=0$ or $a_{i,l}=0$. By the arguments of the first part of the proof, it follows that $V_Q(R_2)\ge 0$. Hence $V_Q(R)=V_Q(R_1)+V_Q(R_2)\ge 0$. 
    The case where both $a_{i,k}$ and $a_{i,l}$ are nonzero and at least one of the other two corner values is zero is done in a similar way.
\end{proof}

\begin{corollary}\label{defects for dense ASM}
    Suppose that $A$ is a proper dense ASM of size $n$ and $Q=Q(A)\in\Q_n'$ is the corresponding proper \blue{discrete} quasi-copula. Then, the entries of all the defect matrices are equal to $0$ or $-1$. An entry is equal to $-1$ if and only if the corner entry of $A$ in the defect direction (one of the corner entries in the two defect directions for $D_M^Q$ and $D_O^Q$) is equal to $-1$. In particular,
    \begin{enumerate}
        \item[(a)] $D_{\searrow}^Q(r,s)=-1$ if and only if $a_{r+1,s+1}=-1$,
        \item[(b)] $D_{\swarrow}^Q(r,s)=-1$ if and only if $a_{r+1,s}=-1$,
        \item[(c)] $D_{\nwarrow}^Q(r,s)=-1$ if and only if $a_{r,s}=-1$, 
        \item[(d)] $D_{\nearrow}^Q(r,s)=-1$ if and only if $a_{r,s+1}=-1$, 
        \item[(e)] $D_M^Q(r,s)=-1$ if and only if $\min\{ a_{r+1,s+1},a_{r,s}\}=-1$,
        \item[(f)] $D_O^Q(r,s)=-1$ if and only if $\min\{a_{r+1,s},a_{r,s+1} \}=-1$.
    \end{enumerate}
\end{corollary}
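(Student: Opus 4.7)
The plan is to derive this corollary as a straightforward consequence of Proposition~\ref{V_Q=-1}, with essentially no extra combinatorics required. The first observation I would make is that Proposition~\ref{V_Q=-1} gives the lower bound $V_Q(R) \geq -1$ for every rectangle $R$ on the grid, and each directional defect entry is by definition the minimum of $0$ and a family of such volumes. This immediately forces every entry of $D_\searrow^Q, D_\swarrow^Q, D_\nwarrow^Q, D_\nearrow^Q$ to lie in $\{-1, 0\}$; since $D_M^Q$ and $D_O^Q$ are entry-wise minima of two of these matrices, they too take only the values $0$ or $-1$. That settles the opening assertion of the corollary.

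Next I would prove (a) by handling the two implications separately. If $D_\searrow^Q(r,s) = -1$, then some rectangle $R \in \R_\searrow(r,s)$ has $V_Q(R) = -1$, and Proposition~\ref{V_Q=-1} then forces the four corner ASM entries of $R$ to all equal $-1$. The corner closest to $(r,s)$ in the $\searrow$ direction is $a_{r+1,s+1}$, so in particular $a_{r+1,s+1} = -1$. Conversely, if $a_{r+1,s+1} = -1$ I would simply exhibit the smallest possible rectangle in $\R_\searrow(r,s)$, namely the unit cell whose single ASM entry is $a_{r+1,s+1}$ itself; its $Q$-volume equals precisely that entry and is therefore $-1$, which combined with the universal lower bound forces $D_\searrow^Q(r,s) = -1$. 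Parts (b), (c), (d) follow by the identical two-step argument with the near-corner entry replaced respectively by $a_{r+1,s}$, $a_{r,s}$, $a_{r,s+1}$.

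For (e) and (f), everything is immediate from the definitions $D_M^Q = \min\{D_\searrow^Q, D_\nwarrow^Q\}$ and $D_O^Q = \min\{D_\swarrow^Q, D_\nearrow^Q\}$ together with parts (a)--(d). Since each ingredient takes values in $\{-1, 0\}$, the minimum equals $-1$ if and only if at least one of the two ingredients is $-1$, which by (a)--(d) is exactly the condition $\min\{a_{r+1,s+1}, a_{r,s}\} = -1$ for $D_M^Q$ and $\min\{a_{r+1,s}, a_{r,s+1}\} = -1$ for $D_O^Q$.

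The only point that I expect to require genuine attention is the off-by-one bookkeeping: for each of the four directions one must verify that the indexing in $\R_\alpha(r,s)$ really produces a unit-cell rectangle whose single ASM entry is the ``near corner'' claimed in the corollary statement, and that this entry is indeed identified with one of the four corner entries appearing in the characterisation of $V_Q(R)=-1$ given by Proposition~\ref{V_Q=-1}. Once this routine indexing check is done, the proof is entirely mechanical and inherits all of its substance from Proposition~\ref{V_Q=-1}; no separate inductive analysis of the structure of dense ASMs is needed beyond what is already embedded in that proposition.
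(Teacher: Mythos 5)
Your proposal is correct and matches the paper's intent exactly: the paper states this result as an immediate corollary of Proposition~\ref{V_Q=-1} with no written proof, and your two-step derivation (unit cell for the ``if'' direction, the corner characterisation of $V_Q(R)=-1$ for the ``only if'' direction, then entry-wise minima for (e)--(f)) is precisely the argument being left implicit. Your closing caveat about the off-by-one bookkeeping is well placed, since the paper's definition of $\R_{\searrow}(r,s)$ as written would shift the near-corner entry to $a_{r+2,s+2}$ under the volume formula \eqref{V_Q(A)}; the intended reading is rectangles anchored at the grid point $(r,s)$, under which your identification of the near corners is the right one.
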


\begin{lemma}\label{defects F}
    Suppose that $A$ is a proper dense ASM of size $n$. Then the defect matrices of $A$, or the corresponding proper \blue{discrete} quasi-copula $Q(A)$, are:
    \begin{eqnarray*}
    D^A_{\searrow}&=&J_nA^-J_n^T,\\
    D^A_{\swarrow}&=&J_nA^-,\\
    D^A_{\nwarrow}&=&A^-,\\
    D^A_{\nearrow}&=&A^-J_n^T,\\
    D^A_{M}&=&A^-\wedge J_nA^-J_n^T,\\
    D^A_{O}&=&J_nA^-\wedge A^-J_n^T.
    \end{eqnarray*}
\end{lemma}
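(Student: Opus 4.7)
The plan is to verify the six identities entry-by-entry, using Corollary~\ref{defects for dense ASM} as a dictionary between the combinatorial defect values and the matrix entries. Proposition~\ref{V_Q=-1} already guarantees that every entry of every defect matrix of a proper dense ASM lies in $\{0,-1\}$, so it is enough to confirm that each right-hand side is a $\{0,-1\}$-matrix whose $-1$'s sit exactly where the corollary predicts.

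The core step is to describe how $J_n$ and $J_n^T$ act as shift operators on an $n\times n$ matrix $M$: since $(J_n)_{ik}=\delta_{k,i+1}$ (and the last row of $J_n$ is zero), one has $(J_nM)_{ij}=M_{i+1,j}$ for $i<n$ and $(J_nM)_{nj}=0$, and symmetrically $(MJ_n^T)_{ij}=M_{i,j+1}$ for $j<n$ with $(MJ_n^T)_{in}=0$. Specialising to $M=A^-$ and comparing against items (a)--(d) of the corollary disposes of the four directional identities simultaneously: $(A^-)_{ij}=a^-_{ij}$ matches $D^A_{\nwarrow}$ by (c); $(J_nA^-)_{ij}=a^-_{i+1,j}$ matches $D^A_{\swarrow}$ by (b); $(A^-J_n^T)_{ij}=a^-_{i,j+1}$ matches $D^A_{\nearrow}$ by (d); and $(J_nA^-J_n^T)_{ij}=a^-_{i+1,j+1}$ matches $D^A_{\searrow}$ by (a). The boundary zeros forced by the shift matrices agree with exactly those indices at which the corresponding rectangle set in $\R$ is empty, so no separate argument is needed at the edges.

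The identities for $D^A_M$ and $D^A_O$ are then immediate from the definitions $D_M^Q=\min\{D_{\searrow}^Q,D_{\nwarrow}^Q\}$ and $D_O^Q=\min\{D_{\swarrow}^Q,D_{\nearrow}^Q\}$: taking entry-wise minima of the directional formulas gives $D_M^A=A^-\wedge J_nA^-J_n^T$ and $D_O^A=J_nA^-\wedge A^-J_n^T$. All the real content is absorbed into Corollary~\ref{defects for dense ASM} and Proposition~\ref{V_Q=-1}; what remains is bookkeeping. The step I would treat most carefully is the alignment between the boundary zeros produced by the shift matrices and the boundary indices where each directional rectangle set is empty, since a misplaced index is the only plausible source of a mistake, but this is routine and not a genuine obstacle.
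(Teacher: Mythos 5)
Your proposal is correct and matches the paper's proof, which consists of the single line that the result follows directly from Corollary~\ref{defects for dense ASM}; you simply make explicit the shift-operator reading of $J_n$ and $J_n^T$ and the boundary bookkeeping that the paper leaves implicit. No discrepancy.
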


\begin{proof}
    The result follows directly by applying Corollary \ref{defects for dense ASM}. 
\end{proof}

\subsection{A maximal chain of dual pairs}
\blue{We now derive the main results of Section~\ref{sec:4}. We show that for each $n$ the set of dense ASM is invariant under the main and the opposite transformations. 
The discrete quasi-copulas corresponding to irreducible dense ASM together with $M$ and $W$ provide a maximal chain for the two operations on discrete quasi-copulas that start and end with $W$ or $M$. In this way, we also obtain a maximal chain of dual pairs. 
In addition, we prove that any dual pair in the chain is coherent and thus avoids sure loss.}
\begin{theorem}\label{main and opposite of F}
    Suppose $n\ge 3$. Then 
    $$Q(F_n^k)_M=Q(F_n^{k-1})\  \textit{and}\ Q(F_n^k)_O=Q(F_n^{k+1})$$ 
    for $2\le k\le n-1$.
\end{theorem}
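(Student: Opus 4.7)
Both equalities are established by the same strategy; I outline the argument for $Q(F_n^k)_M = Q(F_n^{k-1})$, as the argument for $Q(F_n^k)_O = Q(F_n^{k+1})$ runs in parallel using parts (b), (d), (f) of Corollary~\ref{defects for dense ASM} in place of (a), (c), (e), and replacing the shift $(-1,-1)$ defining the support set below by the antidiagonal shifts $(-1,0)$ and $(0,-1)$. By the definition $Q_M = Q - D_M^Q$, the target equality is equivalent to
\[
D_M^{F_n^k} \;=\; Q(F_n^k) - Q(F_n^{k-1}),
\]
and since the map $A\mapsto Q(A)$ is linear with inverse $Q\mapsto K_n^T Q K_n$ given by \eqref{eq:transf}, this reduces further to the ASM-level identity
\[
K_n^T\,D_M^{F_n^k}\,K_n \;=\; F_n^k - F_n^{k-1}.
\]

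The plan is to describe both sides explicitly and then compare them. For the right-hand side, the description of $F_n^k$ in Example~\ref{example F_n^k} can be summarised as follows. Let
\[
D_k \;=\; \{(i,j)\in\{1,\dots,n\}^2 : k+1\le i+j\le 2n-k+1 \text{ and } |i-j|\le k-1\}
\]
be the closed lattice diamond that carries the support of $F_n^k$. The alternating-sign pattern gives $(F_n^k)_{i,j}=(-1)^{i+j+k+1}$ on $D_k$ and $0$ outside, from which one reads off that $(F_n^k-F_n^{k-1})_{i,j}$ equals $2(-1)^{i+j+k+1}$ on $D_k\cap D_{k-1}$, equals $(-1)^{i+j+k+1}$ on the symmetric difference $D_k\triangle D_{k-1}$, and is $0$ elsewhere.

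For the left-hand side, Corollary~\ref{defects for dense ASM}(e) (or equivalently Lemma~\ref{defects F}) identifies $D_M^{F_n^k}$ as the $\{0,-1\}$-valued matrix whose $-1$-positions form
\[
S_k \;=\; N_k \,\cup\, \{(r,s) : (r+1,s+1)\in N_k\},
\]
where $N_k$, the set of $-1$-entries of $F_n^k$, consists of the interior lattice points of $D_k$ at which $i+j+k$ is even. Extending $D_M^{F_n^k}$ by the boundary convention $D_M^{F_n^k}(0,s)=D_M^{F_n^k}(r,0)=0$, the conjugation $K_n^T(\,\cdot\,)K_n$ acts as a discrete mixed difference,
\[
(K_n^T D_M^{F_n^k} K_n)_{i,j} \;=\; D_M^{F_n^k}(i,j) - D_M^{F_n^k}(i-1,j) - D_M^{F_n^k}(i,j-1) + D_M^{F_n^k}(i-1,j-1).
\]

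It remains to verify, entry by entry at each $(i,j)\in\{1,\dots,n\}^2$, that this discrete mixed difference of $-\mathbf{1}_{S_k}$ agrees with the closed form for $F_n^k-F_n^{k-1}$ above. I will organise the case analysis by the position of $(i,j)$ with respect to $D_k$ and $D_{k-1}$: strictly interior to both diamonds, strictly interior to exactly one of them (corresponding to the four boundary edges lying in $D_k\triangle D_{k-1}$), on one of the four extreme tips of either diamond, and outside both. For each case one checks which of the four neighbouring positions $(i,j), (i-1,j), (i,j-1), (i-1,j-1)$ belong to $S_k$ and reads off the signed sum; the parity condition defining $N_k$ ensures the resulting signs line up with $(-1)^{i+j+k+1}$. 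The main obstacle is keeping this casework tidy, since the shifts defining $S_k$ interact subtly with the NW–SE and NE–SW asymmetries between the two diamonds, and one must simultaneously track the parities of $i+j$ and $i-j$ together with their comparison to $k$ and $k-1$.
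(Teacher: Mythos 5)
Your overall strategy is sound, and it takes a genuinely different route from the paper's. The paper stays at the level of the defect matrices themselves: using Corollary~\ref{defects for dense ASM}(e) and (f) it writes down explicit coordinate parametrizations of the $-1$-positions of $D_M^{F_n^k}$ and of $D_O^{F_n^{k-1}}$ and compares them. You instead push everything through the linear bijection $Q\mapsto K_n^TQK_n$ and aim at the ASM-level identity $K_n^T D_M^{F_n^k}K_n=F_n^k-F_n^{k-1}$. Your intermediate descriptions are all correct (I checked them for $n=4$, $k=2$): the diamond $D_k$ does carry the support of $F_n^k$ with the sign $(-1)^{i+j+k+1}$, the difference $F_n^k-F_n^{k-1}$ has the stated values on $D_k\cap D_{k-1}$ and on $D_k\triangle D_{k-1}$, and by Corollary~\ref{defects for dense ASM}(e) the $-1$-set of $D_M^{F_n^k}$ is indeed $S_k=N_k\cup\{(r,s):(r+1,s+1)\in N_k\}$. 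Your reduction also has the virtue of targeting the literal equality $Q(F_n^k)_M=Q(F_n^{k-1})$, whereas the paper's displayed computation establishes the relation $D_M^{F_n^k}=D_O^{F_n^{k-1}}$ and leaves the identification with $Q(F_n^k)-Q(F_n^{k-1})$ implicit.

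The gap is that the decisive step --- verifying entry by entry that the mixed difference of $-\mathbf{1}_{S_k}$ equals your closed form for $F_n^k-F_n^{k-1}$ --- is announced and organized but never carried out. For a statement whose entire content is a finite computation, that casework \emph{is} the proof; everything preceding it is bookkeeping. You yourself flag the interaction of the $(-1,-1)$ shift with the parities of the two diamonds as the ``main obstacle,'' which is precisely the part a reader cannot reconstruct without redoing the work, and it is also where sign errors would hide if the statement were false. To close the argument you must either execute the case analysis (for each $(i,j)$, determine which of $(i,j)$, $(i-1,j)$, $(i,j-1)$, $(i-1,j-1)$ lie in $S_k$, split according to whether $(i,j)$ is in $D_k\cap D_{k-1}$, on one of the four edges of $D_k\triangle D_{k-1}$, at a tip, or outside both diamonds), or adopt the paper's shortcut of parametrizing the $-1$-positions of $D_M^{F_n^k}$ explicitly and matching them directly against the $\{0,-1\}$-matrix $Q(F_n^k)-Q(F_n^{k-1})$.
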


\begin{proof}
    We now show that $D_M^{F_n^k}=D_O^{F_n^{k-1}}$ for $2\leq k\leq n$. 
    The corollary \ref{defects for dense ASM}\textit{(e)} ensures that $D_M^{F_n^k}(r,s)=-1$ if and only if $r=k+t-u$, $s=t+u-1$ for $t=1,2,\ldots,n-k$ and $u=1,2,\ldots,k$.  By Corollary \ref{defects for dense ASM}\textit{(f)}, we have that $D_O^{F_n^k}(r,s)=-1$ if and only if $r=k+t-u-1$, $s=t+u-1$ for $t=1,2,\ldots,n-k+1$ and $u=1,2,\ldots,k-1$. Therefore, combining these results, we obtain the desired equality $D_M^{F_n^k}=D_O^{F_n^{k-1}}$.
\end{proof}

\begin{corollary}\label{maximal chain}
    Suppose that $n\ge 4$. Then $(F^{k}_n,F_n^{k-1})$ is a dual pair of ASM for $3\le k\le n-1$. So, \blue{discrete} imprecise copulas $\left(Q(F_n^{n-1}),Q(F_n^{n-2})\right)$, $\left(Q(F_n^{n-2}),Q(F_n^{n-3})\right)$, \ldots, $\left(Q(F_n^{3}),Q(F_n^{2})\right)$ form a maximal chain of self-dual \blue{discrete} imprecise copulas, i.e., a chain $\{Q_r\}_{r=1}^l$, $l\ge 2$, of proper \blue{discrete} quasi-copulas such that $(Q_r)_O=Q_{r-1}$ for $r=2,3,\ldots, l$ and $(Q_r)_M=Q_{r+1}$ for $r=1,2,\ldots,l-1$. The chain cannot be extended further since $Q(F_n^{n-1})_O=W$ and $Q(F_n^{2})_M=M$.  
\end{corollary}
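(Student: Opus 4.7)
The plan is to derive the corollary as an immediate bookkeeping consequence of Theorem \ref{main and opposite of F}; I do not expect any substantive technical obstacle, so the proof will be short.

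First I would verify that $(F^k_n, F^{k-1}_n)$ is a dual pair of ASM for each $3\le k\le n-1$. Unpacking Definition \ref{def:imco}, this amounts to establishing the two identities $Q(F^k_n)_M = Q(F^{k-1}_n)$ and $Q(F^{k-1}_n)_O = Q(F^k_n)$. The first is exactly the first conclusion of Theorem \ref{main and opposite of F}. The second follows by substituting $k-1$ in place of $k$ in the second conclusion of the same theorem: $Q(F^{k-1}_n)_O = Q(F^{(k-1)+1}_n) = Q(F^k_n)$. Both applications require the running index to lie in $\{2,\ldots,n-1\}$, which holds since $k$ and $k-1$ both satisfy this when $3\le k\le n-1$ and $n\ge 4$.

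Next I would exhibit the chain explicitly by setting $Q_r = Q(F^{n-r}_n)$ for $r=1,2,\ldots,n-2$, giving $l=n-2\ge 2$. Two more applications of Theorem \ref{main and opposite of F} then yield $(Q_r)_M = Q(F^{n-r-1}_n) = Q_{r+1}$ for $r=1,\ldots,n-3$, and $(Q_r)_O = Q(F^{n-r+1}_n) = Q_{r-1}$ for $r=2,\ldots,n-2$, which are precisely the chain conditions in the statement. Each $Q_r$ is a proper quasi-copula because, for $2\le n-r\le n-1$, the matrix $F^{n-r}_n$ has entries equal to $-1$ in its middle stripes, as described in Example \ref{example F_n^k}.

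Finally, to show maximality I would attempt to extend the chain at either end and observe that doing so leaves the class of proper quasi-copulas. At the top, Theorem \ref{main and opposite of F} gives $(Q_1)_O = Q(F^{n-1}_n)_O = Q(F^n_n)$; at the bottom, $(Q_{n-2})_M = Q(F^2_n)_M = Q(F^1_n)$. From Example \ref{example F_n^k} we have $Q(F^n_n)=W_n$ and $Q(F^1_n)=M_n$, the Fréchet bounds, which are copulas and hence not proper quasi-copulas. Consequently no proper quasi-copula can be appended at either end, and the chain is maximal. The only care needed throughout is to track the index ranges so that every invocation of Theorem \ref{main and opposite of F} is legitimate, which is immediate from $n\ge 4$ and the explicit ranges of $r$.
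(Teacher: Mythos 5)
Your proposal is correct and follows exactly the route the paper intends: the corollary is stated without proof as an immediate consequence of Theorem~\ref{main and opposite of F}, and your index bookkeeping (applying the theorem at $k$ and at $k-1$, checking both lie in $\{2,\ldots,n-1\}$, and identifying the endpoints $Q(F_n^n)=W_n$ and $Q(F_n^1)=M_n$ as copulas that cannot belong to a chain of proper quasi-copulas) is precisely the verification the authors leave implicit. No gaps.
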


We denote by $P_n^{(1)}$ the diagonal matrix with diagonal entries alternating between $1$ and $0$. We write $P_n^{(2)}=I_n-P_n^{(1)}$. For instance, if $n=5$ then we have
$$P_5^{(1)}=\begin{pmatrix}
    1 & 0 & 0 & 0 & 0 \\
    0 & 0 & 0 & 0 & 0\\
    0 & 0 & 1 & 0 & 0\\
    0 & 0 & 0 & 0 & 0 \\
    0 & 0 & 0 & 0 & 1
\end{pmatrix}\ \mathrm{and}\ P_5^{(2)}=\begin{pmatrix}
    0 & 0 & 0 & 0 & 0 \\
    0 & 1 & 0 & 0 & 0\\
    0 & 0 & 0 & 0 & 0\\
    0 & 0 & 0 & 1 & 0 \\
    0 & 0 & 0 & 0 & 0
\end{pmatrix}.$$

\begin{theorem}\label{Fnk coherent}
     Suppose that $n\ge 4$. Then each discrete imprecise copula $\left(Q(F^{k}_n),Q(F_n^{k-1})\right)$ for $3\le k\le n-1$  \blue{ is coherent. In particular, it avoids sure loss.}
\end{theorem}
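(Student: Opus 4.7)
The plan is to handle avoidance of sure loss by writing down an explicit copula lying between the bounds, and then to upgrade this to coherence by producing two extreme copulas whose values at every critical entry cover both endpoints of the imprecise interval.

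First, for avoidance of sure loss, I would take the midpoint
$$C=\tfrac12\bigl(Q(F_n^k)+Q(F_n^{k-1})\bigr),$$
whose associated matrix, by linearity of the bijection $Q\mapsto A(Q)$, is $A(C)=\tfrac12(F_n^k+F_n^{k-1})$. The stripe/parallelogram structure recalled in Example~\ref{example F_n^k} shows that the $-1$-entries of $F_n^k$ sit precisely in positions where $F_n^{k-1}$ has a $+1$ (and vice versa), while the remaining $+1$-positions of the two matrices are disjoint. Hence $F_n^k+F_n^{k-1}$ is a nonnegative $0$-$1$ matrix, and since each summand is bistochastic with row and column sums equal to $1$, $A(C)$ is bistochastic. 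Therefore $C$ is a discrete copula, and it lies in $[Q(F_n^k),Q(F_n^{k-1})]$ by construction, giving $\mathcal C(Q(F_n^k),Q(F_n^{k-1}))\neq\emptyset$.

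For coherence I would use two vertices of the Birkhoff polytope. The support of $F_n^k+F_n^{k-1}$ is a $2$-regular bipartite $0$-$1$ matrix, so by K\"onig's theorem (equivalently by edge-$2$-coloring each even cycle of the support) one obtains a decomposition $F_n^k+F_n^{k-1}=P_1+P_2$ into permutation matrices. Using the clockwise-alternating choice illustrated in Example~\ref{example F_n^k}, both $Q(P_1)$ and $Q(P_2)$ can be kept inside $[Q(F_n^k),Q(F_n^{k-1})]$. Granting this, coherence essentially falls out for free: the identity $Q(P_1)+Q(P_2)=Q(F_n^k)+Q(F_n^{k-1})$ shows that at every entry $(r,s)$ the nonnegative integers $Q(P_1)(r,s)-Q(F_n^k)(r,s)$ and $Q(P_2)(r,s)-Q(F_n^k)(r,s)$ sum to $Q(F_n^{k-1})(r,s)-Q(F_n^k)(r,s)$. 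By the self-duality of Corollary~\ref{maximal chain} combined with Corollary~\ref{defects for dense ASM}(e), this gap equals $1$ exactly at the critical entries (those with $D_M^{F_n^k}(r,s)=-1$) and $0$ elsewhere. Hence at each critical $(r,s)$ one of $Q(P_1)(r,s), Q(P_2)(r,s)$ realises the lower bound and the other the upper bound, so both the infimum and supremum in Definition~\ref{def_coherence} are attained at every entry.

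The main obstacle is the claim that the alternating decomposition in fact keeps both $Q(P_1)$ and $Q(P_2)$ in the interval. Not every $2$-coloring of the support cycles works: already for $n=4,k=3$ some of the four admissible decompositions produce a $Q(P_1)$ that exceeds $Q(F_n^{k-1})$ at a non-critical cell. The verification I would carry out proceeds cycle by cycle on the support of $F_n^k+F_n^{k-1}$: each cycle travels along the boundary of the parallelogram of $F_n^k$ (interleaved with that of $F_n^{k-1}$), and the clockwise-alternating choice assigns to $P_1$ and $P_2$ edges that alternate with the $\pm 1$ pattern inside each $[1,r]\times[1,s]$ rectangle. A direct partial-sum count along each cycle then shows $0\le Q(P_1)-Q(F_n^k)\le Q(F_n^{k-1})-Q(F_n^k)$ entry-wise, which together with the integer argument of the previous paragraph completes the proof of both avoidance of sure loss and coherence.
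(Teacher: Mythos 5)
Your avoidance-of-sure-loss argument coincides with the paper's: the midpoint $\tfrac12\bigl(F_n^k+F_n^{k-1}\bigr)$ is a nonnegative bistochastic matrix by the observation in Example~\ref{example F_n^k}, so the corresponding copula lies in the interval by construction.

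For coherence you use the same two permutation matrices as the paper (the clockwise-alternating decomposition of $F_n^k+F_n^{k-1}$), but you set the construction up from the ASM side, which is why you are left with what you call the ``main obstacle'' — checking that $Q(P_1)$ and $Q(P_2)$ actually stay inside $\bigl[Q(F_n^k),Q(F_n^{k-1})\bigr]$ — and you only sketch that verification. As written, this is the one genuine hole: your ``sum plus integrality'' argument for attaining both endpoints is valid, but it is conditional on interval membership, which is exactly the unproved step. The paper runs the same construction in the opposite direction so that this step disappears: it sets $E_i=P_n^{(i)}D_M^{F_n^k}$, where $P_n^{(1)},P_n^{(2)}$ are the complementary alternating $0$--$1$ diagonal matrices, and defines $C_i=Q(F_n^k)-E_i$. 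Since each $E_i$ has entries in $\{0,-1\}$ and $E_1+E_2=D_M^{F_n^k}=Q(F_n^k)-Q(F_n^{k-1})$ (by Theorem~\ref{main and opposite of F} and Corollary~\ref{defects for dense ASM}), the containment $Q(F_n^k)\le C_i\le Q(F_n^{k-1})$ and the entrywise identities $\min\{C_1,C_2\}=Q(F_n^k)$ and $\max\{C_1,C_2\}=Q(F_n^{k-1})$ are immediate from the definition; the only thing left to check is that each $C_i$ is a copula, i.e., that $A(C_i)=F_n^k-A(E_i)$ is nonnegative, which the paper reduces to the local observation that every block $\left(\begin{smallmatrix}-1&0\\0&0\end{smallmatrix}\right)$ of $E_i$ becomes $\left(\begin{smallmatrix}-1&1\\1&-1\end{smallmatrix}\right)$ in $A(E_i)$. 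That local computation is precisely the ``partial-sum count along each cycle'' you defer; to complete your proof you should either carry it out explicitly or switch to the defect-matrix parametrization, which buys you interval membership and the min/max identity for free at the cost of one easy nonnegativity check.
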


\begin{proof}
    We first focus on showing that $\left(Q(F^{k}_n),Q(F_n^{k-1})\right)$ \blue{avoids} sure loss, that is, there exists a proper  \blue{discrete} copula $C$ in between the two \blue{discrete} quasi-copulas $Q(F^{k}_n)$ and $Q(F_n^{k-1})$. From the \blue{decomposition in \eqref{example F_n^k}} it follows that the midpoint $A=\frac12(F_n^{k}+F_n^{k-1})$ is a matrix with all nonnegative entries. Therefore, the corresponding \blue{discrete} quasi-copula $C=Q(A)$ is in fact a \blue{discrete} copula and we have $Q(F^{k}_n)\le C\le Q(F_n^{k-1})$ by construction. So $\left(Q(F^{k}_n),Q(F_n^{k-1})\right)$ \blue{avoids} sure loss. 

    We write $E_1=P_n^{(1)}D_M^{F_n^k}$ and $E_2=P_n^{(2)}D_M^{F_n^k}$. We want to show that $C_1=Q(F_n^k)-E_1$ and $C_2=Q(F_n^k)-E_2$ are \blue{discrete} copulas such that $Q(F^{k}_n)\le C_i\le Q(F_n^{k-1})$ for $i=1,2$. In fact, they are  \blue{discrete}copulas corresponding to the two permutation matrices obtained by writing $F_n^{k}+F_n^{k-1}$ as the sum of the two permutation matrices \blue{by Proposition \ref{consec_F_n^k}.} It is also possible to observe that the ASM matrix corresponding to $C_i$, $i=1,2$, is nonnegative by computing matrices $A(E_i)=(K_n)^{-1} E_i K_n^{-1}$. When deriving $A(E_i)$, we observe that each $2\times 2$ block $\left(\begin{array}{rr}
        -1 & 0 \\
         0 & 0
    \end{array}\right)$ in $E_i$ is replaced by  $\left(\begin{array}{rr}
        -1 & 1 \\
         1 & -1
    \end{array}\right)$ in $A(E_i)$. Then $A(C_i)=F_n^{k}-A(E_i)$ is a nonnegative matrix.    
    Furthermore, it is easy to observe that $C_1$ and $C_2$ are constructed so that for each entry we have 
    $$Q(F^{k}_n)(r,s)=\min\{C_1(r,s),C_2(r,s)\}\ \textrm{ and }\ Q(F_n^{k-1})(r,s)=\max\{C_1(r,s),C_2(r,s)\}.$$ 
    Hence, discrete imprecise copula $\left(Q(F^{k}_n), Q(F_n^{k-1})\right)$ is coherent.
%
%
\end{proof}

\medskip

We note that Brualdi and Schroeder \cite[Thm. 4]{BS} proved that the interval $(F_n^{k},F_n^{k-1})$ in the lattice $\A_n$ is isomorphic as a lattice to the lattice of all subsets of a set of cardinality $k(n-k)$.

\subsection{Characterization of dense dual pairs}
\blue{In this last part of Section~\ref{sec:4}, we combine the results proved previously in the section and derive a characterization of dense dual pairs in terms of their nonzero block structure.}
\begin{theorem}
    A dense ASM $A$ is such that $(Q(A),Q(A)_M)$ is a \blue{discrete} imprecise copula if and only if all its nonzero blocks are of the form $F_{m_r}^{k_r}$ for some integers $m_r$ such that $\sum_{r=1}^l m_r=n$ and either $m_r=k_r=1$ or $m_r\ge 4$ and $3\le k_r\le m_r-1$. Furthermore, we have $m_r\ge 4$ for at least one $r$. 

    A dense ASM $A$ is such that $(Q(A)_O,Q(A))$ is a \blue{discrete} imprecise copula if and only if all its nonzero blocks are of the form $F_{m_r}^{k_r}$ for some integers $m_r$ such that $\sum_{r=1}^l m_r=n$ and either $m_r=k_r=1$ or $m_r\ge 4$ and $2\le k_r\le m_r-2$. Additionally, we have $m_r\ge 4$ for at least one $r$.
\end{theorem}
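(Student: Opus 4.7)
I would reduce both statements to a block-by-block analysis, combining the structure theorem for dense ASMs (Theorem~\ref{structure of dense ASM}) with the patchwork construction (Theorem~\ref{Th:patch}). First, Theorem~\ref{structure of dense ASM} writes $A$ in its block-permutation form with nonzero blocks $A_r = F_{m_r}^{k_r}$ (possibly including some $[1]$-blocks with $m_r = k_r = 1$). By Proposition~\ref{V_Q=-1} and Corollary~\ref{defects for dense ASM}, every rectangle achieving $V_{Q(A)}(R) = -1$ must have all of its $-1$-corners inside a single nonzero block, so $D_M^{Q(A)}$ coincides block-wise with $D_M^{Q(A_r)}$ and $Q(A)_M$ is the patchwork of the $Q(A_r)_M$. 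Theorem~\ref{main and opposite of F} identifies $Q(F_{m_r}^{k_r})_M = Q(F_{m_r}^{k_r-1})$ on each non-trivial block, so the global question reduces to asking, on each block, when the pair $(Q(F_{m_r}^{k_r}), Q(F_{m_r}^{k_r-1}))$ is a valid imprecise copula.

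On a non-$[1]$ block the inequalities (IC2) are automatic via the Dibala et al.\ criterion: $P_M \le Q$ is tautological, and $P \le Q_O$ reduces, using Theorem~\ref{main and opposite of F} once more, to the trivial equality $Q(F_{m_r}^{k_r}) \le (Q(F_{m_r}^{k_r-1}))_O = Q(F_{m_r}^{k_r})$. The only remaining requirement is that both components be proper quasi-copulas: $Q(F_{m_r}^{k_r})$ is proper iff $m_r \ge 3$ and $2 \le k_r \le m_r - 1$, while $Q(F_{m_r}^{k_r-1})$ is proper iff additionally $k_r - 1 \ge 2$, jointly yielding $m_r \ge 4$ and $3 \le k_r \le m_r - 1$. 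For the sufficiency direction I would then apply the patchwork Theorem~\ref{Th:patch} to assemble the block-wise imprecise copulas into $(Q(A), Q(A)_M)$; the ``furthermore'' hypothesis $m_r \ge 4$ for some $r$ guarantees global propriety. The second statement is obtained by the symmetric argument using the opposite defect and the identity $Q(F_{m_r}^{k_r})_O = Q(F_{m_r}^{k_r+1})$ from Theorem~\ref{main and opposite of F}, with propriety now forcing $m_r \ge 4$ and $2 \le k_r \le m_r - 2$ per block.

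For the necessity direction I would argue contrapositively: if some non-$[1]$ block violates $3 \le k_r \le m_r - 1$ (respectively $2 \le k_r \le m_r - 2$), then $Q(F_{m_r}^{k_r-1})$ (respectively $Q(F_{m_r}^{k_r+1})$) collapses on that block to one of the Fréchet bounds $M_{m_r}$ or $W_{m_r}$, which carries no $-1$-entry; consequently the corresponding patch of $Q(A)_M$ (respectively $Q(A)_O$) is locally a copula, and the block-wise pair fails the paper's standing assumption that both components of an imprecise copula be proper. The main obstacle I anticipate lies exactly here: one must verify that this local failure on a single block genuinely propagates to a global failure and cannot be ``absorbed'' by contributions from well-behaved neighbouring blocks. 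My plan is to exhibit an explicit rectangle $R$ lying inside the failing block (for instance one witnessing $V_{Q(A)}(R) = -1$ via Proposition~\ref{V_Q=-1}) and verify directly that one of the four (IC2) inequalities for $(Q(A), Q(A)_M)$ breaks down on $R$, using the block-localized form of the defects established in the first paragraph.
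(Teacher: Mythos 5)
Your overall skeleton---block decomposition via Theorem~\ref{structure of dense ASM}, localization of the defect matrices to the irreducible blocks, identification of the block transforms via Theorem~\ref{main and opposite of F}, and assembly by Theorem~\ref{Th:patch}---is exactly the paper's route, and your sufficiency direction (including the propriety bookkeeping that yields $m_r\ge 4$ and $3\le k_r\le m_r-1$ on each nontrivial block) is sound and in fact more detailed than the paper's.

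The gap is in your necessity argument. The witnessing rectangle you plan to exhibit does not exist: on a block with $k_r=2$ the pair $\bigl(Q(F_{m_r}^{2}),Q(F_{m_r}^{2})_M\bigr)=\bigl(Q(F_{m_r}^{2}),M_{m_r}\bigr)$ satisfies \emph{all} of the (IC2) inequalities---by the Dibala et al.\ criterion one has $P_M=Q$ tautologically and $P\le Q_O=(M_{m_r})_O=M_{m_r}$ because $M$ is the upper Fr\'echet bound---so no (IC2) inequality breaks on any rectangle, and by your own patchwork argument none breaks globally either. Nor does propriety rescue the argument in the multi-block case: if one block is, say, $F_5^3$ and another is $F_5^2$, then both $Q(A)$ and $Q(A)_M$ are globally proper quasi-copulas and (IC1)--(IC2) hold, so the literal ``imprecise copula'' reading cannot detect the bad block at all. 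What actually fails---and what the paper's proof uses---is \emph{self-duality}: the theorem (read together with its proof and the section heading) characterizes dual pairs, and the block-level obstruction is the endpoint phenomenon recorded in Corollary~\ref{maximal chain}, namely $\bigl(Q(F_{m_r}^{2})_M\bigr)_O=(M_{m_r})_O=M_{m_r}\neq Q(F_{m_r}^{2})$ for the first statement and $\bigl(Q(F_{m_r}^{m_r-1})_O\bigr)_M=(W_{m_r})_M=W_{m_r}\neq Q(F_{m_r}^{m_r-1})$ for the second. Your necessity step should therefore show that $(Q(A)_M)_O$ differs from $Q(A)$ on the offending block---which follows immediately from the defect localization you already established---rather than hunt for an (IC2) violation that is not there.
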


\begin{proof}
    By Theorem \ref{structure of dense ASM} $A$ is of a special block form with the non-zero block being an irreducible dense ASM. The nonzero blocks are of the form $F_{n_u}^{k_u}$, $u=1,2,\ldots, l$. Note that the corresponding \blue{discrete} quasi-copula is a patchwork of  \blue{discrete} quasi-copulas \blue{that correspond to irreducible dense ASM} and $0$ blocks. Then $(Q,Q_M)$ is a self-dual \blue{discrete} imprecise copula if and only if each of its blocks $Q_{rs}$ is either zero or satisfies the condition that $(Q_{rs},\left(Q_{rs}\right)_M)$ is a self-dual \blue{discrete} imprecise copula. The first part then follows by the fact that $F_n^k$ induces a self-dual \blue{discrete} imprecise copula $(Q(F_n^k),Q(F_n^k)_M)$ if and only if $n\ge 4$ and $3\le k\le n-1$ (see Corollary \ref{maximal chain}). For the second part, we use that $F_n^k$ induces a  \blue{discrete} imprecise copula $(Q(F_n^k)_O,Q(F_n^k))$ if and only if $n\ge 4$ and $2\le k\le n-2$.
\end{proof}

\section{An example of non-dense dual pairs for square grids of size $n\geq 7$}
\label{isolated pairs}
We now discuss a construction of dual pairs which is not covered by the results of Section~\ref{sec:4}. 
The dual-pair construction proposed in this section originates from \cite[Example 11]{FinalS}. \blue{We begin with the case $n=7$ for illustration to ease the understanding of more technical general case, and then move to the general case of an arbitrary grid size $n > 7$.}

\subsection{Non-dense pair for a fixed grid size $n=7$}

Let $Q({A_7})$ and $Q({B_7})$ be the proper discrete quasi-copulas defined as follows:
\[Q({A_7})=\begin{pmatrix}
                        0 & 0  & 0 & 0 & 0 & 1 & 1 \\
                        0  & 0 & 0 & 0 & 1 & 2 & 2\\
                        0 & 0 & 0 & 1 & 2 & 2 & 3\\
                        0 & 0 & 1 & 2 & 2 & 3 & 4\\                        
                        0 & 1 & 2 & 2 & 3 & 4 & 5 \\
                        1 & 2 & 2 & 3 & 4 & 5 & 6\\
                        1 & 2 & 3 & 4 & 5 & 6 & 7 \\
\end{pmatrix} \quad \text{and} \quad
Q({B_7})=\begin{pmatrix}
                        0 & 0 & 0 & 0 & 1 & 1 & 1 \\
                        0 & 0 & 0 & 1 & 2 & 2 & 2 \\
                        0 & 0 & 1 & 2 & 2 & 3 & 3 \\
                        0 & 1 & 2 & 2 & 3 & 4 & 4\\
                        1 & 2 & 2 & 3 & 4 & 4 & 5 \\
                        1 & 2 & 3 & 4 & 4 & 5 & 6\\
                        1 & 2 & 3 & 4 & 5 & 6 & 7 \\
\end{pmatrix},
\]
whose corresponding alternating sign matrices $A_7$ and $B_7$ are as given below:
\[A_7=\begin{pmatrix}
                        0 & 0  & 0 & 0 & 0 & 1 & 0 \\
                        0  & 0 & 0 & 0 & 1 & 0 & 0\\
                        0 & 0 & 0 & 1 & 0 & -1 & 1\\
                        0 & 0 & 1 & 0 & -1 & 1 & 0\\
                        0 & 1 & 0 & -1 & 1 & 0 & 0 \\
                        1 & 0 & -1 & 1 & 0 & 0 & 0\\
                        0 & 0 & 1 & 0 & 0 & 0 & 0 \\                        
\end{pmatrix} \quad \text{and} \quad
B_7=\begin{pmatrix}
                        0 & 0  & 0 & 0 & 1 & 0 & 0 \\
                        0  & 0 & 0 & 1 & 0 & 0 & 0\\
                        0 & 0 & 1 & 0 & -1 & 1 & 0\\
                        0 & 1 & 0 & -1 & 1 & 0 & 0\\
                        1 & 0 & -1 & 1 & 0 & -1 & 1 \\
                        0 & 0 & 1 & 0 & -1 & 1 & 0\\
                        0 & 0 & 0 & 0 & 1 & 0 & 0 \\                        
\end{pmatrix}.
\]

We notice that the matrices $A_7$ and $B_7$ display an interesting pattern in their distribution of the positive, zero, and negative mass. 
Namely, $A_7$ has zero mass on the (second) diagonal, which makes a clear cut between the positive and the negative mass. Analogously, $B_7$ displays an alternating double pattern of positive, zero, and negative mass.
In this section, we prove that such a distribution of positive, zero, and negative mass always leads to a dual pair of discrete quasi-copulas. The zero separation makes this construction different than the case of dense ASM presented in Section~\ref{sec:4}, and it is somewhat complementary to it.
Before considering the general case of $(n\times n)$ ASM, we take a closer look at the defect matrices of the $7 \times 7$ example. 
By computing the defects, one can notice that non-zero entries of the defect matrices $D_M^{A_7}$, and $D_O^{B_7}$ are determined by the alternating distribution of the positive, zero, and negative mass in the original quasi-copulas. 
Namely, the entries of the matrix $D_M^{A_7}$ can be derived as follows:
\[D_M^{A_7} = 
\left\{
\begin{aligned}
D_M^{A_7}(7-i-1,i)& = D_{\searrow}^{A_7}(7-i-1,i) = -1 \quad  & \text{for} \quad &i=1,2,3,4,5 \\
D_M^{A_7}(7-i,i)& =D_{\searrow}^{A_7}(7-i,i) =  -1 \quad & \text{for} \quad &i=2,3,4,5 \\
D_M^{A_7}(7-i+1,i+1)& = D_{\nwarrow}^{A_7}(7-i+1,i+1) = -1 \quad & \text{for} \quad & i=2,3,4,5 \\
D_M^{A_7}(7-i+1,i+2)& = D_{\nwarrow}^{A_7}(7-i+1,i+2) = -1 \quad & \text{for} \quad & i=2,3,4 \\
D_M^{A_7}(i,j) &= 0 \quad & & \text{otherwise}
\end{aligned}
\right.\]
Similarly, for the defect matrix $D_O^{B_7}$, we have that:
\[D_O^{B_7} = 
\left\{
\begin{aligned}
D_O^{B_7}(7-i-1,i)& = D_{\nearrow}^{B_7}(7-i-1,i) = -1 \quad  & \text{for} \quad &i=1,2,3 \\
D_O^{B_7}(7-i-1,i)& = D_{\swarrow}^{B_7}(7-i-1,i) = -1 \quad  & \text{for} \quad &i=4,5 \\
D_O^{B_7}(7-i,i)& =D_{\nearrow}^{B_7}(7-i,i) =  -1 \quad & \text{for} \quad &i=2,3 \\
D_O^{B_7}(7-i,i)& =D_{\swarrow}^{B_7}(7-i,i) =  -1 \quad & \text{for} \quad &i=4,5 \\
D_O^{B_7}(7-i+1,i+1)& = D_{\nearrow}^{B_7}(7-i+1,i+1) = -1 \quad & \text{for} \quad & i=2,3 \\
D_O^{B_7}(7-i+1,i+1)& = D_{\swarrow}^{B_7}(7-i+1,i+1) = -1 \quad & \text{for} \quad & i=4,5 \\
D_O^{B_7}(7-i+1,i+2)& = D_{\nearrow}^{B_7}(7-i+1,i+2) = -1 \quad & \text{for} \quad & i=2,3 \\
D_O^{B_7}(7-i+1,i+2)& = D_{\swarrow}^{B_7}(7-i+1,i+2) = -1 \quad & \text{for} \quad & i=4 \\
D_O^{B_7}(i,j) &= 0 \quad & & \text{otherwise}
\end{aligned}
\right.\]
Fig.~\ref{fig:dMA} and Fig.~\ref{fig:dob} show a depiction of the types of rectangles that contribute to the negative entries of the defect matrices $D_M^{A_7}$ and $D_O^{B_7}$.
The defect matrices $D_M^{A_7}$ and $D_O^{B_7}$ are equal \cite[Example 11]{FinalS} and take the form
\[
D_M^{A_7}=D_O^{B_7}=\begin{pmatrix}
                        0 & 0  & 0 & 0 & -1 & 0 & 0 \\
                        0  & 0 & 0 & -1 & -1 & 0 & 0\\
                        0 & 0 & -1 & -1 & 0 & -1 & 0\\
                        0 & -1 & -1 & 0 & -1 & -1 & 0\\
                        -1 & -1 & 0 & -1 & -1 & 0 & 0 \\
                        0 & 0 & -1 & -1 & 0 & 0 & 0\\
                        0 & 0 & 0 & 0 & 0 & 0 & 0 \\
\end{pmatrix}.
\]

\begin{figure}
\centering
\begin{minipage}{.45\linewidth}
    \begin{tikzpicture}[scale=0.4]
\draw (1,1) rectangle (15,15);
\draw (15,1) -- (15,15);
\draw (1,1) -- (15,1);
\draw (0.5,0.3)   node {{\scriptsize $(1,0)$}};
\draw (16,15.5)   node {{\scriptsize$(0,1)$}};
\draw (15.5,0.3)   node {{\scriptsize$(1,1)$}};
\draw (0.3,15.5)   node {{\scriptsize$(0,0)$}};

\draw[dotted] (3,1) -- (3,15);
\draw[dotted] (5,1) -- (5,15);
\draw[dotted] (7,1) -- (7,15);
\draw[dotted] (9,1) -- (9,15);
\draw[dotted] (11,1) -- (11,15);
\draw[dotted] (13,1) -- (13,15);
\draw[dotted] (1,3) -- (15,3);
\draw[dotted] (1,5) -- (15,5);
\draw[dotted] (1,7) -- (15,7);
\draw[dotted] (1,9) -- (15,9);
\draw[dotted] (1,11) -- (15,11);
\draw[dotted] (1,13) -- (15,13);

\draw[ultra thick,dotted, umass,fill=umass!10, fill opacity=0.5] (7,9) rectangle (11,5);
\draw[thick,lincolngreen,fill=lincolngreen!10, fill opacity=0.5] (7.1,7.1) rectangle (11.05,9.05);
\draw[red,pattern=north east lines, pattern color=red] (9.05,7.05) rectangle (11.05,8.95);
\draw (2,4)   node {\textcolor{black}{\textbf{1}}};
\draw (4,6)   node {\textcolor{black}{\textbf{1}}};
\draw (6,8)   node {\textcolor{black}{\textbf{1}}};
\draw (8,10)   node {\textcolor{black}{\textbf{1}}};
\draw (10,12)   node {\textcolor{black}{\textbf{1}}};
\draw (12,14)   node {\textcolor{black}{\textbf{1}}};

 \draw (6,4)   node {\textcolor{black}{\textbf{-1}}};
 \draw (8,6)   node {\textcolor{black}{\textbf{-1}}};
 \draw (10,8)   node {\textcolor{black}{\textbf{-1}}};
 \draw (12,10)   node {\textcolor{black}{\textbf{-1}}};

\draw (6,2)   node {\textcolor{black}{\textbf{1}}};
\draw (8,4)   node {\textcolor{black}{\textbf{1}}};
\draw (10,6)   node {\textcolor{black}{\textbf{1}}};
\draw (12,8)   node {\textcolor{black}{\textbf{1}}};
\draw (14,10)   node {\textcolor{black}{\textbf{1}}};
\end{tikzpicture}
    \caption{Representation of the non-zero mass of $A_7$ and depiction of the types of rectangles that contribute to the non-zero entries of the defect matrix $D_M^{A_7}$.}
    \label{fig:dMA}
\end{minipage}
\hspace{.05\linewidth}
\begin{minipage}{.45\linewidth}
\begin{tikzpicture}[scale=0.4]
\draw (1,1) rectangle (15,15);
\draw (15,1) -- (15,15);
\draw (1,1) -- (15,1);
\draw (0.5,0.3)   node {{\scriptsize $(1,0)$}};
\draw (16,15.5)   node {{\scriptsize$(0,1)$}};
\draw (15.5,0.3)   node {{\scriptsize$(1,1)$}};
\draw (0.3,15.5)   node {{\scriptsize$(0,0)$}};

\draw[dotted] (3,1) -- (3,15);
\draw[dotted] (5,1) -- (5,15);
\draw[dotted] (7,1) -- (7,15);
\draw[dotted] (9,1) -- (9,15);
\draw[dotted] (11,1) -- (11,15);
\draw[dotted] (13,1) -- (13,15);
\draw[dotted] (1,3) -- (15,3);
\draw[dotted] (1,5) -- (15,5);
\draw[dotted] (1,7) -- (15,7);
\draw[dotted] (1,9) -- (15,9);
\draw[dotted] (1,11) -- (15,11);
\draw[dotted] (1,13) -- (15,13);

 \draw[thick,lincolngreen,fill=lincolngreen!10, fill opacity=0.5] (3.05,5.05) rectangle (7,7);
 \draw[red,pattern=north east lines, pattern color=red] (5.05,5.05) rectangle (6.95,6.95);

 \draw[ultra thick,dotted, umass,fill=umass!10, fill opacity=0.5] (9,5) rectangle (13,7);
 \draw[safetyorange(blazeorange),pattern=north west lines, pattern color=safetyorange(blazeorange)] (11.05,5.05) rectangle (12.95,6.95);

\draw (2,6)   node {\textcolor{black}{\textbf{1}}};
\draw (4,8)   node {\textcolor{black}{\textbf{1}}};
\draw (6,10)   node {\textcolor{black}{\textbf{1}}};
\draw (8,12)   node {\textcolor{black}{\textbf{1}}};
\draw (10,14)   node {\textcolor{black}{\textbf{1}}};

 \draw (6,6)   node {\textcolor{black}{\textbf{-1}}};
 \draw (8,8)   node {\textcolor{black}{\textbf{-1}}};
 \draw (10,10)   node {\textcolor{black}{\textbf{-1}}};
 
\draw (6,4)   node {\textcolor{black}{\textbf{1}}};
\draw (8,6)   node {\textcolor{black}{\textbf{1}}};
\draw (10,8)   node {\textcolor{black}{\textbf{1}}};
\draw (12,10)   node {\textcolor{black}{\textbf{1}}};

 \draw (10,4)   node {\textcolor{black}{\textbf{-1}}};
 \draw (12,6)   node {\textcolor{black}{\textbf{-1}}};

\draw (10,2)   node {\textcolor{black}{\textbf{1}}};
\draw (12,4)   node {\textcolor{black}{\textbf{1}}};
\draw (14,6)   node {\textcolor{black}{\textbf{1}}};
\end{tikzpicture}
    \caption{Representation of the non-zero mass of $B_7$ and depiction of the types of rectangles that contribute to the non-zero entries of the defect matrix $D_O^{B_7}$.}
    \label{fig:dob}
\end{minipage}
\end{figure}


\subsection{Construction of non-dense pairs for $n\ge 7$}
\label{subsec:ispai_n}

The following result generalizes the example above to arbitrary matrix sizes $n\geq7$. 

\begin{proposition}
\label{pro:isolat_pairs_n}
Suppose $Q(A_n)=\left({q^{A_n}_{i,j}}\right)_{i,j = 1}^n$ and $Q(B_n)$ are two proper quasi-copulas defined as follows:
\[
Q(A_n) = 
\left\{
\begin{aligned}
q^{A_n}_{i,j} &= 0 \quad    & \text{for} \quad &j=1,\ldots,n-2, \quad i\leq n-j-1 \\
q^{A_n}_{n,1} &= q^{A_n}_{1,n}=1 = q^{A_n}_{n-i,i} & \text{for} \quad & i=1,\ldots,n-1 \\
q^{A_n}_{n,2} &= q^{A_n}_{2,n} = 2 = q^{A_n}_{n-i+1,i} = q^{A_n}_{n-i+1,i+1} \quad & \text{for} \quad & i=2,\ldots, n-1\\
q^{A_n}_{n-j,i+1} &= n+j-i
\quad & \text{for} \quad & j=0,\ldots,n-2,\\
& & \text{and} \quad & i=j+2,\ldots,\min\{n-1, j+2\} 
\end{aligned}
\right.
\]

and $Q(B_n) = Q(A_n) - D$, where $D= \left[d_{i,j}\right]_{i,j = 1}^n $ is given by:
\[
D= 
\left\{
\begin{aligned}
d_{n-i-1,i}& =  -1 \quad  & \text{for} \quad &i=1,\ldots,n-2 \\
d_{n-i,i}& =  -1 \quad & \text{for} \quad &i=2,\ldots, n-2 \\
d_{n-i+1,i+1}& =  -1 \quad & \text{for} \quad & i=2,\ldots, n-2 \\
d_{n-i+1,i+2}& =  -1 \quad & \text{for} \quad & i=2,\ldots, n-3 \\
d_{i,j} &= 0 \quad & & \text{otherwise}
\end{aligned}
\right.
\]

Then $Q(A_n)$ and $Q(B_n)$ form a dual pair.
\begin{proof}
The discrete quasi-copulas $Q(A_n)$ and $Q(B_n)$ are natural extensions of the matrices $Q(A_7)$ and $Q(B_7)$ to arbitrary grid domains of size $n$. As such, they correspond to the following alternating sign matrices $A_n = \left(a_{i,j}\right)_{i,j = 1}^n$ and $B_n = \left(b_{i,j}\right)_{i,j = 1}^n$ defined as:
\[
A_n = \left\{
\begin{aligned}
a_{n-i+1,i-1}&= 1 \quad  & \text{for} \quad &i=2,\ldots,n\\
a_{n-i,i+1}& = -1 \quad & \text{for} \quad &i=2,\ldots, n-2 \\
a_{n-i+1,i+2}& = 1 \quad & \text{for} \quad & i=1,\ldots,n-2 \\
a_{i,j} &= 0 \quad & & \text{otherwise}
\end{aligned}
\right. 
\quad
B_n = \left\{
\begin{aligned}
b_{n-i+1,i-2}& = 1 \quad  & \text{for} \quad &i=3,\ldots,n\\
b_{n-i+1,i}& = -1 \quad & \text{for} \quad &i=3,\ldots, n-2 \\
b_{n-i,i+1}& = 1 \quad & \text{for} \quad & i=2,\ldots,n-3 \\
b_{n-i+1,i+3}& = -1 \quad & \text{for} \quad & i=2,\ldots,n-4 \\
b_{n-i+1,i+4}& = 1 \quad & \text{for} \quad & i=1,\ldots,n-4 \\
b_{i,j} & = 0 \quad & & \text{otherwise}
\end{aligned}
\right.
\]
Since the mass distribution of $A_n$ and $B_n$ follows the same pattern as for the case $n=7$, the defect matrices $D_M^{A_n}$ and $D_O^{B_n}$ can be easily derived in a similar way. Therefore, $D_M^{A_n}$ and $D_O^{B_n}$ are given by the following expressions:
\[D_M^{A_n} = 
\left\{
\begin{aligned}
D_M^{A_n}(n-i-1,i)& = D_{\searrow}^{A_n}(n-i-1,i) = -1 \quad  & \text{for} \quad &i=1,\ldots,n-2 \\
D_M^{A_n}(n-i,i)& =D_{\searrow}^{A_n}(n-i,i) =  -1 \quad & \text{for} \quad &i=2,\ldots,n-2\\
D_M^{A_n}(n-i+1,i+1)& = D_{\nwarrow}^{A_n}(n-i+1,i+1) = -1 \quad & \text{for} \quad & i=2,\ldots,n-2 \\
D_M^{A_n}(n-i+1,i+2)& = D_{\nwarrow}^{A_n}(n-i+1,i+2) = -1 \quad & \text{for} \quad & i=2,\ldots,n-3 \\
D_M^{A_n}(i,j) &= 0 \quad & & \text{otherwise}
\end{aligned}
\right.\]
In a similar way, for the defect matrix $D_O^{B_n}$, we have that:
\[D_O^{B_n} = 
\left\{
\begin{aligned}
D_O^{B_n}(n-i-1,i)& = 
-1 \quad  & \text{for} \quad &i=1,\ldots,n-2  \\
D_O^{B_n}(n-i,i)& = 
-1 \quad & \text{for} \quad &i=2,\ldots,n-2  \\
D_O^{B_n}(n-i+1,i+1)& = 
-1 \quad & \text{for} \quad & i=2,\ldots,n-2  \\
D_O^{B_n}(n-i+1,i+2)& = 
-1 \quad & \text{for} \quad & i=2,\ldots,n-3 \\
D_O^{B_n}(i,j) &= 0 \quad & & \text{otherwise}
\end{aligned}
\right.\]
We notice that the matrix $D$ equals the defect matrices $D_O^{B_n}$ and $D_M^{A_n}$. In particular, $Q(B_n) = Q(A_n) - D = Q(A_n) - D_M^{A_n}$ and $Q(A_n) = Q(B_n) + D_M^{A_n} = Q(B_n) + D_O^{B_n}$. Hence, $(Q(A_n),Q(B_n))$ is a dual pair.
\end{proof}
\end{proposition}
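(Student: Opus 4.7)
The plan is to reduce the statement to the single equality $D_M^{A_n} = D_O^{B_n} = D$, since by the remark following Definition~\ref{def:imco} a pair $(P,Q)$ of quasi-copulas is self-dual precisely when $D_M^P = D_O^Q$, and combined with the equation $Q(B_n) = Q(A_n) - D$ in the statement this immediately yields both $(Q(A_n))_M = Q(B_n)$ and $(Q(B_n))_O = Q(A_n)$.

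First I would confirm the explicit forms of the ASMs $A_n$ and $B_n$ announced in the proposition by applying the transformation~\eqref{eq:transf} (equivalently $A = K_n^T Q K_n$) to the piecewise definitions of $Q(A_n)$ and $Q(B_n)$. This is a direct computation: in $A_n$, the $+1$s sit on two adjacent anti-diagonals separated by a line of $-1$s and a buffering anti-diagonal of zeros; in $B_n$, the $+1$s and $-1$s form a double alternating anti-diagonal strip. This step is straightforward pattern verification generalizing the $n=7$ case displayed above.

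The heart of the proof is the computation of the four directional defect matrices $D_\searrow^{A_n}$, $D_\nwarrow^{A_n}$, $D_\swarrow^{B_n}$, $D_\nearrow^{B_n}$. Since $A_n$ and $B_n$ are not dense, Corollary~\ref{defects for dense ASM} does not apply directly; however the same inequality $V_Q(R) \geq -1$ from Proposition~\ref{V_Q=-1} still holds, because the alternating-sign property together with the prescribed gap structure implies that inside any rectangle $R$ the $-1$ entries cannot exceed the $+1$ entries by more than one. I would then, for each grid point $(r,s)$ and each of the four directions, enumerate the rectangles in the corresponding family $\R_\searrow(r,s)$, $\R_\nwarrow(r,s)$, $\R_\swarrow(r,s)$, $\R_\nearrow(r,s)$ whose fixed corner sits on a $-1$ of $A_n$ (resp.\ $B_n$), and check that among these the minimum volume is exactly $-1$, while for all other $(r,s)$ every reachable rectangle has nonnegative volume. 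The index families obtained are exactly the four listed in the definition of $D$, split across the two directions $\searrow,\nwarrow$ for $A_n$ and across $\swarrow,\nearrow$ for $B_n$.

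The main obstacle is the bookkeeping in this last step: four directional defects, four index families, and the need to take pointwise minima to assemble $D_M^{A_n}$ and $D_O^{B_n}$. Concretely, one must verify that the union of lattice points where at least one of $D_\searrow^{A_n}, D_\nwarrow^{A_n}$ equals $-1$ matches the union of points where at least one of $D_\swarrow^{B_n}, D_\nearrow^{B_n}$ equals $-1$, and that both sets coincide with the support of $D$. Once this bijective matching of index families is carried out, both defect matrices agree with $D$ entry-by-entry, so $D_M^{A_n} = D_O^{B_n}$, and the dual pair property of $(Q(A_n), Q(B_n))$ follows.
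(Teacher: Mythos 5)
Your overall strategy is the same as the paper's: verify the ASMs $A_n$ and $B_n$, compute $D_M^{A_n}$ and $D_O^{B_n}$, and conclude from $D_M^{A_n}=D_O^{B_n}=D$ together with $Q(B_n)=Q(A_n)-D$ that the pair is self-dual. That reduction is correct. However, the central step of your plan rests on a false criterion. You correctly note that Corollary~\ref{defects for dense ASM} does not apply because $A_n$ and $B_n$ are not dense, but you then reinstate its conclusion by proposing to locate the $-1$ entries of the directional defects only at grid points $(r,s)$ whose adjacent corner cell in the defect direction is a $-1$ of the ASM, and to declare all other volumes nonnegative. For these non-dense matrices that is exactly where the dense-case intuition breaks: half of the support of $D$ consists of grid points whose adjacent cell is $0$ or even $+1$. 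For instance, in $A_7$ one has $D_{\searrow}^{A_7}(1,5)=-1$ although $a_{2,6}=0$ (the $2\times 1$ rectangle on rows $2$--$3$ of column $6$ reaches across the inserted zero anti-diagonal to the $-1$ at $(3,6)$), and $D_{\nwarrow}^{A_7}(6,4)=-1$ although $a_{6,4}=+1$ (via the $2\times 2$ rectangle on rows $5$--$6$, columns $3$--$4$). These are precisely the families $D_M^{A_n}(n-i-1,i)$ and $D_M^{A_n}(n-i+1,i+2)$ in the statement; your enumeration would miss them entirely, so the index families you obtain would \emph{not} coincide with the four families defining $D$, and the identity $D_M^{A_n}=D_O^{B_n}=D$ would fail under your bookkeeping. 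The analogous failure occurs for $D_{\nearrow}^{B_n}$ and $D_{\swarrow}^{B_n}$.

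To repair the argument you must, for each grid point, minimize the volume over \emph{all} rectangles in the relevant directional family, allowing rectangles whose near corner cell is zero but which extend past the zero anti-diagonal to capture a $-1$; this is what produces the doubled band structure of $D$ visible in Figures~\ref{fig:dMA} and~\ref{fig:dob}. Separately, your claim that $V_Q(R)\ge -1$ persists for these non-dense matrices is plausible and is in fact needed, but it does not follow from Proposition~\ref{V_Q=-1} as stated and would require its own (short) argument exploiting the specific band structure of $A_n$ and $B_n$.
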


\subsection{\blue{The constructed pairs in section~\ref{subsec:ispai_n} are coherent and avoid sure loss}}

As for the dual pairs constructed in Section~\ref{sec:4}, a natural question arises whether or not the dual pairs presented in this section \blue{avoid} sure loss, i.e., whether there exists a proper copula $C$ such that $Q(A_n) \leq C \leq Q(B_n)$, and whether or not they are coherent.
The answer to these questions is affirmative, as shown in the example below.

\begin{example}\label{AB7_is_coherent}
    We first illustrate the reasoning for $n=7$. We consider again the dual pair $(Q(A_7),Q(B_7))$. One may easily check that the following three \blue{discrete} quasi-copulas $C_i$, $i=1,2,3$, are such that $Q(A_7)\le C_i\le Q(B_7)$:
    \[C_1=\begin{pmatrix}
                        0 & 0 & 0 & 0 & 1 & 1 & 1 \\
                        0 & 0 & 0 & 1 & 2 & 2 & 2\\
                        0 & 0 & 0 & 1 & 2 & 2 & 3\\
                        0 & 1 & 1 & 2 & 3 & 3 & 4\\                        
                        1 & 2 & 2 & 3 & 4 & 4 & 5 \\
                        1 & 2 & 2 & 3 & 4 & 5 & 6\\
                        1 & 2 & 3 & 4 & 5 & 6 & 7 \\
\end{pmatrix}, \quad
C_2=\begin{pmatrix}
                        0 & 0 & 0 & 0 & 1 & 1 & 1 \\
                        0 & 0 & 0 & 0 & 1 & 2 & 2\\
                        0 & 0 & 1 & 1 & 2 & 3 & 3\\
                        0 & 1 & 2 & 2 & 3 & 4 & 4\\                        
                        0 & 1 & 2 & 2 & 3 & 4 & 5 \\
                        1 & 2 & 3 & 3 & 4 & 5 & 6\\
                        1 & 2 & 3 & 4 & 5 & 6 & 7 \\
\end{pmatrix}, 
\]
\[
\text{and} \quad
C_3=\begin{pmatrix}
                        0 & 0 & 0 & 0 & 0 & 1 & 1 \\
                        0 & 0 & 0 & 1 & 1 & 2 & 2 \\
                        0 & 0 & 1 & 2 & 2 & 3 & 3 \\
                        0 & 0 & 1 & 2 & 2 & 3 & 4\\
                        1 & 1 & 2 & 3 & 3 & 4 & 5 \\
                        1 & 2 & 3 & 4 & 4 & 5 & 6\\
                        1 & 2 & 3 & 4 & 5 & 6 & 7 \\
\end{pmatrix}.
\]
The alternating sign matrices $P_i$, $i=1,2,3$, corresponding to each $C_i$ are:
\[P_1=\begin{pmatrix}
                        0 & 0 & 0 & 0 & 1 & 0 & 0 \\
                        0 & 0 & 0 & 1 & 0 & 0 & 0\\
                        0 & 0 & 0 & 0 & 0 & 0 & 1\\
                        0 & 1 & 0 & 0 & 0 & 0 & 0\\
                        1 & 0 & 0 & 0 & 0 & 0 & 0 \\
                        0 & 0 & 0 & 0 & 0 & 1 & 0\\
                        0 & 0 & 1 & 0 & 0 & 0 & 0 \\                        
\end{pmatrix}, \quad
P_2=\begin{pmatrix}
                        0 & 0 & 0 & 0 & 1 & 0 & 0 \\
                        0 & 0 & 0 & 0 & 0 & 1 & 0\\
                        0 & 0 & 1 & 0 & 0 & 0 & 0\\
                        0 & 1 & 0 & 0 & 0 & 0 & 0\\
                        0 & 0 & 0 & 0 & 0 & 0 & 1 \\
                        1 & 0 & 0 & 0 & 0 & 0 & 0\\
                        0 & 0 & 0 & 1 & 0 & 0 & 0 \\                        
\end{pmatrix}, 
\]
\[
\text{and} \quad
P_3=\begin{pmatrix}
                        0 & 0 & 0 & 0 & 0 & 1 & 0 \\
                        0 & 0 & 0 & 1 & 0 & 0 & 0\\
                        0 & 0 & 1 & 0 & 0 & 0 & 0\\
                        0 & 0 & 0 & 0 & 0 & 0 & 1\\
                        1 & 0 & 0 & 0 & 0 & 0 & 0 \\
                        0 & 1 & 0 & 0 & 0 & 0 & 0\\
                        0 & 0 & 0 & 0 & 1 & 0 & 0 \\                        
\end{pmatrix}
\]
Since the matrices $P_i$ do not have any negative entry, they are permutation matrices and, as a consequence, $C_1, C_2,$ and $C_3$ are proper discrete copulas that satisfy the condition $Q(A_7) \leq C_i \leq Q(B_7)$ for $i=1,2,3$. Hence, the discrete imprecise copula $(Q(A_7), Q(B_7))$ \blue{avoids} sure loss.

Now, we investigate whether the pair $(Q(A_7), Q(B_7))$ is also coherent. 
We start by focusing on the $16$ matrix indexes $(r,s)$ for which the entries in $Q(A_7)$ and $Q(B_7)$ are distinct. For such indexes, there is a $C_i$ such that its entry into position $(r,s)$ is equal to the entry in position $(r,s)$ of $Q(A_7)$, and a $C_j$ such that its entry in position $(r,s)$ is equal to the entry in position $(r,s)$ of $Q(B_7)$. 
This shows that $Q(A_7)$ is the lower bound of the set of all copulas $\C=\{C;\ Q(A_7)\leq C\leq Q(B_7)\}$ and that $Q(B_7)$ is the upper bound of $\C$. Therefore, the self-dual \blue{discrete} imprecise copula $(Q(A_7),Q(B_7))$ is coherent.

\end{example}

Before considering the general case $n\ge 7$, we introduce three matrices $E_i$ defined as the differences $E_i=C_i-Q(A_7)$: 
\blue{
\[E_1=\begin{pmatrix}
                        0 & 0 & 0 & 0 & 1 & 0 & 0 \\
                        0 & 0 & 0 & 1 & 1 & 0 & 0 \\
                        0 & 0 & 0 & 0 & 0 & 0 & 0 \\
                        0 & 1 & 0 & 0 & 1 & 0 & 0 \\
                        1 & 1 & 0 & 1 & 1 & 0 & 0 \\
                        0 & 0 & 0 & 0 & 0 & 0 & 0 \\ 
                        0 & 0 & 0 & 0 & 0 & 0 & 0 \\      
\end{pmatrix}, \quad
E_2=\begin{pmatrix}
                        0 & 0 & 0 & 0 & 1 & 0 & 0 \\
                        0 & 0 & 0 & 0 & 0 & 0 & 0 \\
                        0 & 0 & 1 & 0 & 0 & 1 & 0 \\
                        0 & 1 & 1 & 0 & 1 & 1 & 0 \\
                        0 & 0 & 0 & 0 & 0 & 0 & 0 \\
                        0 & 0 & 1 & 0 & 0 & 0 & 0 \\
                        0 & 0 & 0 & 0 & 0 & 0 & 0 \\      
\end{pmatrix}, 
\]
\[
\text{and} \quad
E_3=\begin{pmatrix}
                        0 & 0 & 0 & 0 & 0 & 0 & 0 \\
                        0 & 0 & 0 & 1 & 0 & 0 & 0 \\
                        0 & 0 & 1 & 1 & 0 & 1 & 0 \\
                        0 & 0 & 0 & 0 & 0 & 0 & 0 \\
                        1 & 0 & 0 & 1 & 0 & 0 & 0 \\
                        0 & 0 & 1 & 1 & 0 & 0 & 0 \\   
                        0 & 0 & 0 & 0 & 0 & 0 & 0 \\      
\end{pmatrix}.
\]
}
We observe that, for each $E_i$, the entries on the first anti-diagonal above the main anti-diagonal, and on the second anti-diagonal below the main anti-diagonal form (part of) the sequence $1,1,0,1,1,0$. Furthermore, the entry on the main anti-diagonal or on the third anti-diagonal below the main anti-diagonal is nonzero and equal to $1$ if and only if both the entry to the left and above are equal to $1$. The other entries are all $0$. 

We now extend Example~\ref{AB7_is_coherent} to arbitrary grid size $n \geq 7$ by generalizing the pattern in \blue{$E_i$}. Namely, we define the matrices \blue{$E_{n,i}=\left[e_{rs}^{(n,i)}\right]_{r,s=1}^{n}$}, $i=1,2,3$, as follows: 

\begin{description}
\item[] {\blue{$\mathbf{E_{n,1}}$:}} The sequence of entries $e_{1,n-2}^{(n,1)},e^{(n,1)}_{2,n-3},\ldots,e^{(n,1)}_{n-2,1},e^{(n,1)}_{3,n-1},e^{(n,1)}_{4,n-2},\ldots,e^{(n,1)}_{n-1,3}$ is periodic with a period of length three of the form $1,1,0$. The entry $e^{(n,1)}_{ij}$ with $i+j\in\{n,n+3\}$ is equal to $1$ if and only if $e^{(n,1)}_{i,j-1}=e^{(n,1)}_{i-1,j}=1$. All other entries are equal to $0$.
\item[] $\blue{\mathbf{E_{n,2}}}$: The sequence of entries $e_{1,n-2}^{(n,2)},e^{(n,2)}_{2,n-3},\ldots,e^{(n,2)}_{n-2,1},e^{(n,2)}_{3,n-1},e^{(n,2)}_{4,n-2},\ldots,e^{(n,2)}_{n-1,3}$ is periodic with a period of length three of the form $1,0,1$. The entry $e^{(n,2)}_{ij}$ with $i+j\in\{n,n+3\}$ is equal to $1$ if and only if $e^{(n,2)}_{i,j-1}=e^{(n,2)}_{i-1,j}=1$. All other entries are equal to $0$.
\item[]$\blue{\mathbf{E_{n,3}}}$: The sequence of entries $e_{1,n-2}^{(n,3)},e^{(n,3)}_{2,n-3},\ldots,e^{(n,3)}_{n-2,1},e^{(n,3)}_{3,n-1},e^{(n,3)}_{4,n-2},\ldots,e^{(n,3)}_{n-1,3}$ is periodic with a period of length three of the form $0,1,1$. The entry $e^{(n,3)}_{ij}$ with $i+j\in\{n,n+3\}$ is equal to $1$ if and only if $e^{(n,3)}_{i,j-1}=e^{(n,3)}_{i-1,j}=1$. All other entries are equal to $0$.
\end{description}
We consider $C_{n,i}=Q(A_n)+E_{n,i}$. Now, $C_{n,i}$ are \blue{discrete} copulas such that $Q(A_n)\leq C_{n,i}\leq Q(B_n)$ for all $i$. Furthermore, for each $r,s\in L_n$, we have 
    $$Q(A_n)(r,s)=\min\{C_{n,1}(r,s),C_{n,2}(r,s),C_{n,3}(r,s)\}, \quad Q(B_n)(r,s)=\max\{C_{n,1}(r,s),C_{n,2}(r,s),C_{n,3}(r,s)\}.$$
Therefore, the dual pair $(A_n,B_n)$ is coherent.

\section{Conclusions}
\label{sec:Conclusions}
In this paper, we study imprecise copulas on a discrete grid domain. We present construction techniques to obtain self-dual discrete imprecise copulas. We focus on the discrete imprecise copulas that correspond to the special class of dense Alternating Sign Matrices \cite{BS}, which are of general interest in discrete geometry. Additionally, we generalize Example~11 of \cite{FinalS} and obtain dual pairs that are non-dense. In all cases, we prove that the constructed pairs \blue{avoid} sure loss and are coherent.
We complement our constructions with a general result showing how to patch together self-dual discrete imprecise copulas to maintain duality in the discrete and continuous settings.

\smallskip
This work focuses on the duality of specific extreme points of the polytope of discrete quasi-copulas, i.e., those that correspond to Alternating Sign Matrices with determined entry patterns. 
In the future, we will investigate how properties propagate in more general discrete settings. \blue{In particular, we are in interested in dual pairs, coherent quasi-copulas, and imprecise copulas that avoid sure loss.}
Since all our constructions result in dual pairs that \blue{avoid} sure loss, a natural question would be whether or not it is possible to find a dual pair of extreme quasi-copulas that does not satisfy this property. 
Additionally, one could also study how the mass distribution of a dual-pair of quasi-copulas can be modified to preserve duality. The non-dense self-dual imprecise copulas presented in Section~\ref{isolated pairs} show that inserting diagonals of zeros might give other interesting examples of dual pairs. Would further insertion of zero diagonals preserve duality? Also, under which conditions is duality preserved via convex combinations of extreme points? Such questions could be considered in the study of the relation between Birkhoff and Alternating Sign Matrix polytopes, thereby contributing to other fields of research. We plan to investigate these points further in a follow-up work.

\blue{
\section*{Acknowledgments}
Toma\v{z} Ko\v{s}ir was supported in part by the Research and Innovation Agency of Slovenia (research core funding P1-0448 and research project N1-0217). 

The authors are particularly grateful to Michiel Hochstenbach for introducing them during a research visit of the first author with him at the Eindhoven University of Technology.

We would also like to express our gratitude to the reviewers for their careful reading, and thoughtful comments and suggestions on the original version of the manuscript.
}

\bibliographystyle{plain}
\bibliography{references}

\end{document}